\newcommand{\home}{./}
\newcommand{\myqed}{} 
\newcommand{\proofof}{Proof of~}
\newcommand{\BEAS}{\begin{eqnarray*}}
\newcommand{\EEAS}{\end{eqnarray*}}
\newcommand{\BEA}{\begin{eqnarray}}
\newcommand{\EEA}{\end{eqnarray}}
\newcommand{\BEQ}{\begin{equation}}
\newcommand{\EEQ}{\end{equation}}
\newcommand{\BIT}{\begin{itemize}}
\newcommand{\EIT}{\end{itemize}}
\newcommand{\BNUM}{\begin{enumerate}}
\newcommand{\ENUM}{\end{enumerate}}
\newcommand{\BA}{\begin{array}}
\newcommand{\EA}{\end{array}}
\newcommand{\eg}{{\it e.g.}}
\newcommand{\ie}{{\it i.e.}}
\newcommand{\ones}{\mathbf 1}
\newcommand{\reals}{{\mbox{\bf R}}}
\newcommand{\rank}{\mbox{\textrm{Rank}}}
\newcommand{\nullspace}{{\mathcal {\bf nullspace}}}
\newcommand{\argmin}{\mathop{\rm argmin}}
\newcommand{\epi}{\mathop{\bf epi}}
\newcommand{\card}{\mathop{\bf card}}
\newcommand{\conv}{\mathop{\bf conv}}
\newcommand{\cl}{\mathop{\bf cl}}
\theoremstyle{plain}
\newtheorem{theorem}{Theorem}
\newtheorem{lemma}{Lemma}
\newtheorem{corollary}{Corollary}
\theoremstyle{definition}
\theoremstyle{remark}
\newtheorem{example}{Example}
\newlength \figwidth
\long\def\@makecaption#1#2{
   \vskip 9pt 
   \begin{small}
   \setbox\@tempboxa\hbox{{\bf #1:} #2}
   \ifdim \wd\@tempboxa > 5.5in
        \begin{center}
        \begin{minipage}[t]{5.5in}
        \addtolength{\baselineskip}{-0.95pt}
        {\bf #1:} #2 \par
        \addtolength{\baselineskip}{0.95pt}
        \end{minipage}
        \end{center}
   \else 
	\hbox to\hsize{\hfil\box\@tempboxa\hfil}  
   \fi
   \end{small}\par
}
\newcounter{oursection}
\newcounter{lecture}
\begin{document}

\title{Bounding Duality Gap for Separable Problems with Linear Constraints}
\author{
Madeleine Udell and Stephen Boyd
}
\date{\today}

\maketitle

\begin{abstract}
We consider the problem of minimizing a sum of 
non-convex functions over a compact domain,
subject to linear inequality and equality constraints. 
Approximate solutions can be found by solving a convexified version of the problem,
in which each function in the objective is replaced by its convex envelope.
We propose a randomized algorithm to solve the convexified problem
which finds an $\epsilon$-suboptimal solution to the original problem.
With probability one, $\epsilon$ is bounded by a term 
proportional to the maximal number of active constraints in the problem.
The bound does not depend on the number of variables in the problem or 
the number of terms in the objective.
In contrast to previous related work, 
our proof is constructive, self-contained, and gives a bound that is tight.

\end{abstract}

\section{Problem and results}

\paragraph{The problem.}
We consider the optimization problem
\[
\label{eq-primal}
\tag{$\mathcal{P}$}
\begin{array}{ll}
\mbox{minimize} & f(x) = \sum_{i=1}^n f_i(x_i) \\
\mbox{subject to} & A x \leq b \\
& G x = h,
\end{array}
\]
with variable $x = (x_1, \ldots, x_n) \in \reals^N$,
where $x_i \in \reals^{n_i}$, with $\sum_{i=1}^n n_i = N $. 
There are $m_1$ linear inequality constraints,
so $A \in \reals^{m_1 \times N}$, 
and $m_2$ linear equality constraints, so $G \in \reals^{m_2 \times N}$.
The optimal value of \ref{eq-primal} is denoted $p^\star$.
The objective function terms are lower semi-continuous on their domains:
$f_i: S_i\to \reals$, where $S_i \subset \reals^{n_i}$ is a compact set.
We say that a point $x$ is \emph{feasible} (for \ref{eq-primal}) 
if $Ax \leq b$, $Gx = h$, and 
$x_i \in S_i$, $i=1,\ldots,n$.
We say that \ref{eq-primal} is feasible if there is at least one feasible point.
In what follows, we assume that \ref{eq-primal} is feasible.

Linear inequality or equality constraints that pertain only to a 
single block of variables $x_i$
can be expressed implicitly by modifying $S_i$,
so that $x_i \not \in S_i$
when the constraint is violated. 
Without loss of generality, we assume that this transformation has been 
carried out, so that each of the remaining 
linear equality or inequality constraints 
involves at least two blocks of variables.
This reduces the total number of constraints $m = m_1+m_2$;
we will see later why this is advantageous.
Since each of the linear equality or inequality constraints involves at 
least two blocks of variables, they are called \emph{complicating 
constraints}.
Thus $m$ represents the number of complicating constraints,
and can be interpreted as a measure of difficulty for the problem.

We will state our results in terms of a (possibly) smaller quantity
$\tilde m \leq m$,
which provides a (sometimes) tighter estimate of the number of 
complicating constraints in the problem.
Define the \emph{active set} of inequality constraints at $x$
to be $J(x) = \{j : (Ax - b)_j = 0\}$,
let $\tilde m_1 = \max_x |J(x)|$
be the maximal number of inequality constraints that can be 
simultaneously active,
and let $\tilde m = \tilde m_1 + m_2$ be the number of
(equality and inequality) constraints that can be simultaneously active.

We make no assumptions about the convexity of the functions $f_i$
or the convexity of their domains $S_i$,
so that in general the problem is hard to solve
(and even NP-hard to approximate \cite{udell2013}).

\paragraph{Convex envelope.}
For each $f_i$, we let $\hat{f}_i$ denote its \emph{convex envelope}.
The convex envelope $\hat{f}_i: \conv(S_i) \to \reals$  
is the largest closed convex function
majorized by $f_i$, \ie, $f_i(x) \geq \hat{f}_i(x)$ for all $x$
\cite[Theorem 17.2]{rockafellar1970}. 
When $f_i$ is lower semi-continuous and $S_i$ is compact and nonempty, 
then $\conv(S_i)$ is compact and convex,
and $\hat{f}_i$ is closed, proper, and convex \cite{rockafellar1970}.
In \S \ref{s-examples}, we give a number of examples 
in which we compute $\hat{f}_i$ explicitly.

\paragraph{Nonconvexity of a function.}
Define the \emph{nonconvexity} $\rho(f)$ 
of a function $f: S \to \reals$ to be
\[
\rho(f) = \sup_x (f(x) - \hat{f}(x)),
\]
where for convenience we define a function to be infinite outside of its domain
and interpret $\infty - \infty$ as $0$.
Evidently $\rho(f) \geq 0$, 
and $\rho(f) = 0$ if and only if $f$ is convex and closed.
The nonconvexity $\rho$ is finite if 
$f$ is bounded and lower semi-continuous and $S$ is compact and convex.
For convenience, we assume that the functions $f_i$ are sorted 
in order of decreasing nonconvexity,
so $\rho(f_1) \geq \cdots \geq \rho(f_n)$.

\paragraph{Convexified problem.}
Now, consider replacing each $f_i$ by $\hat{f}_i$
to form a convex problem,
\[
\label{eq-primal-conv}
\tag{$\hat{\mathcal{P}}$}
\begin{array}{ll}
\mbox{minimize} & \hat{f}(x) = \sum_{i=1}^n \hat{f}_i(x_i) \\
\mbox{subject to} & A x \leq b \\
& G x = h,
\end{array}
\]
with optimal value $\hat{p}$.
This problem is convex; if we can efficiently
evaluate $\hat{f}$ and a subgradient (or derivative, if 
$\hat{f}$ is differentiable), then the problem is easily solved using standard methods
for nonlinear convex optimization.
Furthermore, \ref{eq-primal-conv} is feasible as long as \ref{eq-primal} is feasible. 
Evidently $\hat p \leq p^\star$; that is, the optimal value of the 
convexified problem is a lower bound on the optimal value of the original
problem.
We would like to know when a solution to \ref{eq-primal-conv} approximately
solves \ref{eq-primal}. 


Our first result is the following:

\begin{theorem}
\label{thm-bound}
There exists a solution $x^\star$ of \ref{eq-primal-conv} such that
\[
\hat p = \hat f(x^\star) \leq f(x^\star) \leq \hat{p} + \sum_{i=1}^{\min(\tilde m,n)} \rho(f_i).
\]
\end{theorem}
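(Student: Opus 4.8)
The plan is to get the two left inequalities for free and then to \emph{construct} the optimizer $x^\star$ realizing the right inequality by a basic-feasible-solution (vertex) argument on a finite linear program; this LP vertex is the constructive heart of the proof. Since $\hat f_i \leq f_i$ pointwise, every point obeys $\hat f(x^\star) \leq f(x^\star)$, and if $x^\star$ is chosen optimal for \ref{eq-primal-conv} then $\hat f(x^\star) = \hat p$ by definition. So the entire content is the upper bound $f(x^\star) \leq \hat p + \sum_{i=1}^{\min(\tilde m, n)} \rho(f_i)$, \ie\ producing an optimizer of \ref{eq-primal-conv} at which at most $\min(\tilde m,n)$ blocks pay the nonconvexity penalty.

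First I would \emph{atomize}. Starting from any optimizer $\hat x$ of \ref{eq-primal-conv}, I use that the epigraph of $\hat f_i$ is the closed convex hull of the epigraph of $f_i$, together with Carath\'eodory's theorem, to write each value as a finite convex combination $\hat f_i(\hat x_i) = \sum_k \lambda_i^{(k)} f_i(x_i^{(k)})$ with $x_i^{(k)} \in S_i$, $\lambda_i^{(k)} \geq 0$, $\sum_k \lambda_i^{(k)} = 1$, and $\hat x_i = \sum_k \lambda_i^{(k)} x_i^{(k)}$. I then set up the LP over weights $\lambda \geq 0$ that minimizes $\sum_{i,k} \lambda_i^{(k)} f_i(x_i^{(k)})$ subject to the $n$ normalization equalities, the $m_2$ coupling equalities $\sum_{i,k} \lambda_i^{(k)} G_{:,i} x_i^{(k)} = h$, and the $m_1$ coupling inequalities $\sum_{i,k} \lambda_i^{(k)} A_{:,i} x_i^{(k)} \leq b$. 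The atomized $\lambda$ is feasible, and a one-line argument (any feasible $\lambda$ induces a point $x_i = \sum_k \lambda_i^{(k)} x_i^{(k)} \in \conv(S_i)$ feasible for \ref{eq-primal-conv} whose $\hat f$-value lower-bounds the LP objective) shows the LP value equals $\hat p$. The feasible set is a bounded product of simplices, so an optimal vertex $\lambda^\sharp$ exists.

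The decisive step is the count at $\lambda^\sharp$. Let $x^\star$ be the aggregate point it induces; by the inequalities above $x^\star$ is optimal for \ref{eq-primal-conv}. Writing the coupling inequalities in standard form with slacks, a basic feasible solution has at most (number of equalities) $+$ (number of active inequalities) $= (n + m_2) + |J(x^\star)| \leq n + m_2 + \tilde m_1 = n + \tilde m$ nonzero weights, where I use that at most $\tilde m_1$ inequalities are active at any point. Since each of the $n$ blocks carries at least one nonzero weight, at most $\tilde m$ (and trivially at most $n$) blocks use two or more atoms. For a single-atom block $x_i^\star \in S_i$ and $f_i(x_i^\star)$ equals the block's LP contribution; for a multi-atom block, $f_i(x_i^\star) \leq \hat f_i(x_i^\star) + \rho(f_i) \leq (\text{LP contribution}) + \rho(f_i)$. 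Summing over $i$ and using that the LP value is $\hat p$ gives $f(x^\star) \leq \hat p + \sum_{\text{mixed } i} \rho(f_i)$, and the assumed sorting $\rho(f_1) \geq \cdots \geq \rho(f_n)$ bounds the last sum by $\sum_{i=1}^{\min(\tilde m, n)} \rho(f_i)$.

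The main obstacle is extracting the clean count $\tilde m_1$ rather than $m_1$: one must recognize that each \emph{inactive} coupling inequality contributes a nonzero slack that occupies a basic slot without forcing a block to split, so only the active inequalities (at most $\tilde m_1$) plus the $m_2$ equalities can create mixed blocks. A secondary technical point, to be handled carefully for merely lower semi-continuous $f_i$, is the attainment of the finite Carath\'eodory representation, which follows from compactness of $S_i$ and the resulting closedness of the relevant convex hull.
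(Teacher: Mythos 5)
Your proof is correct, but it follows a genuinely different route from the paper's. The paper derives Theorem~\ref{thm-bound} as a corollary of Theorem~\ref{thm-construct}: it minimizes a \emph{random} linear functional over the optimal set of \ref{eq-primal-conv}, and combines a geometric fact (extreme points of $\epi(\hat f_i)$ lie on the graph of $f_i$, Lemma~\ref{thm-extreme-same}) with a dimension-counting argument showing that, almost surely, the minimizer is extreme in all but $|J|$ of the blocks $M_i = \argmin_{x_i}(\hat f_i(x_i) - \lambda^{\star T}A_i x_i)$ (Lemma~\ref{thm-extreme-lp}). You instead atomize an optimizer of \ref{eq-primal-conv} via Carath\'eodory, pose a finite LP over the mixing weights, and count nonzeros at an optimal basic feasible solution --- essentially the classical Shapley--Folkman argument in its optimization form. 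Your refinement that inactive coupling inequalities absorb basic slots through their slacks is exactly what recovers the sharper count $\tilde m = \tilde m_1 + m_2$ rather than $m$, and placing the objective in the LP cost rather than as a constrained coordinate is what avoids the $m+1$ of the Aubin--Ekeland bound; so your bound matches the paper's. The trade-off: your argument is more elementary (pure LP vertex theory, no probability-one reasoning) and proves existence directly, but it is not implementable, since the atoms $x_i^{(k)}$ realizing $\hat f_i(\hat x_i)$ are generally unavailable --- this is precisely the gap the paper's randomized program \ref{eq-lp} is designed to close, and why the paper routes the existence statement through the constructive Theorem~\ref{thm-construct}. The one technical point you flag --- attainment of the Carath\'eodory representation with atoms on the graph of $f_i$ --- does need the closedness of $\conv(\epi(f_i))$ for lower semi-continuous $f_i$ on compact $S_i$, which is the same Lemma~\ref{thm-closed} the paper invokes, so your proof ultimately leans on the same technical ingredient.
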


Since $p^\star \leq f(x^\star)$ and $\hat p \leq p^\star$, Theorem~\ref{thm-bound}
implies that 
\[
p^\star \leq f(x^\star) \leq p^\star + \sum_{i=1}^{\min(\tilde m,n)} \rho(f_i).
\]
In other words, there is a solution of the convexified problem that is
$\epsilon$-suboptimal for the original problem, with
$\epsilon = \sum_{i=1}^{\min(\tilde m,n)} \rho(f_i)$.
It is not true (as we show in \S\ref{s-counterexample}) 
that all solutions of the convexified problem are
$\epsilon$-suboptimal.

Theorem~\ref{thm-bound} shows that if the objective function terms are not too nonconvex,
and the number of (active) constraints is not too large, then the 
convexified problem has a solution that is not too suboptimal for the 
original problem.
This theorem is similar to a number of results previously in the literature;
for example, it can be derived from 
the well-known Shapley-Folkman theorem \cite{Starr1969}.
A looser version of this theorem may be obtained from 
the bound on the duality gap given in \cite{Aubin1976}.

Theorem~\ref{thm-bound} also implies a bound on the duality gap 
for problems with separable objectives.
Let 
\[
L (x,\lambda,\mu) = \sum_{i=1}^n f_i(x_i) + \lambda^T (A x - b) + 
\mu^T(G x - h)
\] 
be the Lagrangian of \ref{eq-primal} with dual variables
$\lambda$ and $\mu$,
and define the (Lagrange) dual problem to \ref{eq-primal},
\[
\label{eq-dual}
\tag{$\mathcal{D}$}
\begin{array}{ll}
\mbox{maximize} & \inf_x \mathcal L (x,\lambda,\mu)\\
\mbox{subject to} &  \lambda \geq 0,
\end{array}
\]
with optimal value $g^\star$.
The convexified problem \ref{eq-primal-conv} is the dual of \ref{eq-dual}.
(See Appendix~\ref{s-dual-dual} for a derivation.)
Since \ref{eq-primal-conv} is convex and feasible, with only linear constraints, 
strong duality holds by the refined Slater's constraint qualification \cite[\S 5.2.3]{boyd2004}. 
(For a proof, see \cite[p. 277]{rockafellar1970}.)
Hence the maximum of the dual problem is attained,
\ie, $g^\star = \hat p$
and $\inf_x \mathcal L (x,\lambda^\star) = g^\star$ 
for some $\lambda^\star \geq 0$ .
The bound from Theorem \ref{thm-bound} thus implies
 \[
p^\star - g^\star \leq \sum_{i=1}^{\min (\tilde m,n)} \rho(f_i).
\]

What is not clear in other related work is how to construct a feasible solution 
that satisfies this bound.
This observation leads us to the main contribution of this paper:
a constructive version of Theorem~\ref{thm-bound}.

\begin{theorem}
\label{thm-construct}
Let $w \in \reals^N$ be a random variable with uniform distribution
on the unit sphere.
Now consider the feasible convex problem
\BEQ
\label{eq-lp}
\tag{$\mathcal{R}$}
\begin{array}{ll}
\mbox{minimize} & w^T x\\
\mbox{subject to} & Ax \leq b \\
                  & Gx = h \\
                  & \hat{f}(x) \leq \hat p.
\end{array}
\EEQ
Then with probability one, \ref{eq-lp} has a unique solution $x^\star$
which satisfies the inequality of Theorem~\ref{thm-bound},
\[
f(x^\star) \leq \hat{p} + \sum_{i=1}^{\min(m,n)} \rho(f_i),
\]
\ie, $x^\star$ is $\epsilon$-suboptimal for the original problem \ref{eq-primal}.
\end{theorem}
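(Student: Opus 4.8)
The plan is to show that, with probability one, the minimizer $x^\star$ of \ref{eq-lp} is \emph{unique} and is an \emph{extreme point} of the feasible set $C = \{x : Ax \le b,\ Gx = h,\ \hat f(x) \le \hat p\}$, and then to prove that an extreme point of $C$ can disagree with the convex envelope on at most $\min(m,n)$ blocks. Since $\hat p$ is the minimum of $\hat f$ over the linear constraints, every point of $C$ in fact satisfies $\hat f(x) = \hat p$; combining this with the block bound and the sorting of the $\rho(f_i)$ will give the claimed suboptimality. I would first record the routine facts that $C$ is nonempty (the infimum $\hat p$ is attained, since $\hat f$ is closed and the feasible set compact), compact (it is closed and contained in the compact product $\prod_i \conv(S_i)$, which is forced by finiteness of $\hat f$), and convex, so $w^T x$ attains its minimum on $C$.

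For a.s.\ uniqueness I would use that the support function $w \mapsto \inf_{x \in C} w^T x$ is concave, hence differentiable off a Lebesgue-null set, and that at a point of differentiability the minimizer is unique (the gradient is that unique point). Because the set of ``bad'' directions is a cone, its null Lebesgue measure in $\reals^N$ forces null surface measure on the sphere, so a uniformly random $w$ avoids it with probability one. A unique minimizer of a linear functional over a convex set is automatically an extreme point of $C$: otherwise it would be a proper convex combination of two distinct feasible points, each of which would also minimize $w^Tx$.

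For the core counting, let $x^\star$ be such an extreme point and set $I = \{i : \hat f_i(x_i^\star) < f_i(x_i^\star)\}$, the ``split'' blocks. For $i \in I$ the point $(x_i^\star, \hat f_i(x_i^\star))$ lies strictly below the graph of $f_i$, hence is not an extreme point of $\mathrm{epi}\,\hat f_i = \overline{\conv}\,\mathrm{epi}\,f_i$ (whose extreme points project into the agreement set $\{\hat f_i = f_i\}$). A short argument using the convexity of $\hat f_i$ shows it lies in the relative interior of a segment on the lower boundary whose $x$-projection $\hat d_i \neq 0$ is a direction along which $\hat f_i$ is affine near $x_i^\star$ and $x_i^\star \pm t\hat d_i \in \conv(S_i)$ for small $t$. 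Assembling these (they are supported on distinct blocks) yields an $|I|$-dimensional family of candidate perturbations $d = \sum_{i \in I} t_i \hat d_i$, and $d = 0$ only if all $t_i = 0$. Imposing $Gd = 0$ together with $(Ad)_j = 0$ for the active inequalities $j \in J(x^\star)$ places at most $m_2 + |J(x^\star)| \le m_2 + m_1 = m$ linear equations on the $|I|$ coefficients $t_i$; here I would invoke the reduction that each retained linear constraint couples at least two blocks, so that single-block constraints have already been absorbed into the $S_i$ and do not appear in this count.

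The key point, which I expect to be the main obstacle to state cleanly, is that the objective constraint $\hat f(x) \le \hat p$ costs \emph{no} additional degree of freedom. Indeed, any such $d$ keeps $x^\star \pm t d$ feasible for the linear and block constraints, hence inside the feasible set of \ref{eq-primal-conv}; since $x^\star$ globally minimizes $\hat f$ there, both $\hat f(x^\star \pm td) \ge \hat p$, and as $\hat f$ is affine along $d$ this forces $\hat f(x^\star \pm td) = \hat p$, so both perturbed points lie in $C$. Consequently, if $|I| > m$ then (more unknowns than equations) a nonzero $d$ exists, exhibiting $x^\star$ as the midpoint of two distinct points of $C$ and contradicting extremality; hence $|I| \le \min(m,n)$. (The same count with the sharper $|J(x^\star)| \le \tilde m_1$ in fact gives $|I|\le\tilde m$, recovering Theorem~\ref{thm-bound} constructively.) Finally, since $\hat f(x^\star) = \hat p$ on $C$, bounding each split block's defect by $\rho(f_i)$ and using that the $\rho(f_i)$ are sorted in decreasing order gives
\[
f(x^\star) = \hat p + \sum_{i \in I}\bigl(f_i(x_i^\star) - \hat f_i(x_i^\star)\bigr) \le \hat p + \sum_{i=1}^{\min(m,n)} \rho(f_i),
\]
which is the asserted bound.
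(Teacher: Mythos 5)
Your proof is correct, but it takes a genuinely different route from the paper's. The paper proves this theorem by passing to duality: it picks an optimal dual variable $\lambda^\star$ for \ref{eq-primal-conv}, uses complementary slackness to show that the feasible set of \ref{eq-lp} equals $\{x : Ax \le b\} \cap (M_1 \times \cdots \times M_n)$ where $M_i = \argmin_{x_i}(\hat f_i(x_i) - \lambda^{\star T}A_i x_i)$ are compact convex per-block sets, and then invokes two lemmas: a randomized-LP lemma showing the a.s.\ unique minimizer is extreme in all but $\tilde m$ of the $M_i$ (via a dimension count of a ball in the minimal face of the product against the affine set preserving the active constraints), and a separate lemma showing extreme points of each $M_i$ satisfy $f_i = \hat f_i$ (via closedness of $\conv(\epi(f_i))$). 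You dispense with the dual variable entirely: you show directly that the a.s.\ unique minimizer is an extreme point of the optimal set $C$, and then carry out the dimension count \emph{per split block}, extracting from each block where $\hat f_i < f_i$ a direction along which $\hat f_i$ is affine (your use of the fact that extreme points of $\epi(\hat f_i)$ project into the agreement set is exactly the paper's Lemma on closedness of $\conv(\epi(f_i))$, so you are implicitly relying on the same technical ingredient). Your observation that the constraint $\hat f(x) \le \hat p$ costs no degree of freedom --- because $\hat f$ is affine along the assembled perturbation and $\hat p$ is the global minimum over the linearly constrained set, forcing both perturbed points back into $C$ --- is precisely the role complementary slackness plays in the paper, seen without dual variables. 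What the paper's route buys is a reusable standalone lemma about random LPs over products of convex sets and an explicit link to the Lagrangian (needed anyway for the duality-gap corollary); what your route buys is a shorter, more self-contained argument that never needs existence of $\lambda^\star$ and that produces the certifying two-sided perturbation explicitly, while still recovering the sharper $\tilde m$ bound.
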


The randomized problem~\ref{eq-lp} has a simple interpretation. 
Any feasible point $x$ for \ref{eq-lp} is feasible for \ref{eq-primal-conv},
and the constraint $\hat{f}(x) \leq \hat{p}$ is satisfied with equality.
That is, \ref{eq-lp} minimizes a random linear function over
the optimal set of \ref{eq-primal-conv}. 
Theorem \ref{thm-construct} tells
us that this construction yields (almost surely) an
$\epsilon$-suboptimal solution of \ref{eq-primal}.

We give a self-contained proof of both of these theorems in \S \ref{s-proof}.


\section{Discussion}

In this section 
we show that the bound in Theorem \ref{thm-bound} is tight,
and that finding extreme points of the optimal set is essential to achieving the bound.
In these examples, $\tilde m = m$.

\begin{example}[The bound is tight.]
Consider the problem
\BEQ
\label{eq-rand-primal}
\begin{array}{ll}
\mbox{minimize} & \sum_{i=1}^n g (x_i) \\
\mbox{subject to} & \sum_{i=1}^n x_i \leq B,
\end{array}
\EEQ
with $g: [0,1] \to \reals$ defined as
\[
g(x) = \left\{
        \begin{array}{ll}
            1 & \quad 0 \leq x < 1 \\
            0 & \quad x = 1.
        \end{array}
    \right. 
\]
The convex envelope $\hat{g}:[0,1] \to \reals$ of $g$ is given by
$\hat{g}(x) = 1-x$, with $\rho(g) = 1$.
The convexified problem \ref{eq-primal-conv} corresponding 
to (\ref{eq-rand-primal}) is 
\BEQ
\label{eq-rand-conv}
\begin{array}{ll}
\mbox{minimize} & \sum_{i=1}^n \hat{g} (x_i) \\
\mbox{subject to} & \sum_{i=1}^n x_i \leq B \\
& 0 \leq x.
\end{array}
\EEQ

Any $x^\star$ satisfying $0 \leq x^\star \leq 1$ and 
$\sum_{i=1}^n x^\star_i = B$ is optimal for 
the convexified problem (\ref{eq-rand-conv}), with value $\hat{p} = n - B$.
If $B < 1$, then the optimal value of (\ref{eq-rand-primal})
is $p^\star = n$.
Since (\ref{eq-rand-primal}) has only one constraint, 
the bound from Theorem \ref{thm-bound} applied to this problem gives 
\[
n = p^\star \leq \sum_{i=1}^n g(x^\star_i) \leq \hat{p} + \rho(g) = n - B + 1.
\]
Letting $B \to 1$, we see that the bound is tight.
\end{example}

\begin{example}[Find the extreme points.]
\label{s-counterexample}
Not all solutions to the convexified problem satisfy 
the bound from Theorem \ref{thm-bound}.
As we show in \S\ref{s-proofs}, the value of the convex envelope 
at the extreme points of the optimal set for the convexified problem will be 
provably close to the value of the original function,
whereas the difference between these values on the interior of the optimal set
may be arbitrarily large.

For example, suppose $n-1 < B < n$ in the problem defined above.
As before, the optimal set for the convexified problem (\ref{eq-rand-conv}) is 
\[
M = \{x: \textstyle \sum_{i=1}^n x_i = B, ~x_i \geq 0,~ i=1,\ldots,n\}. 
\]

Consider $\hat{x}\in M$ with $\hat{x}_i = B/n$, $i=1,\ldots,n$, 
which is optimal for the convexified problem (\ref{eq-rand-conv}). 
This $\hat{x}$ does \emph{not} obey the bound in Theorem \ref{thm-bound};
indeed, the suboptimality of $\hat{x}$ grows linearly with $n$.
With this $\hat x$, the left hand side of the inequality in Theorem \ref{thm-bound} 
is $\sum_{i=1}^n g(\hat{x}_i) = n$,
while the right hand side $\hat{p} + \rho(g) = n - B + 1 < 2$ is much smaller.

On the other hand, $x^\star\in M$ defined by
\[
x^\star_i = \left\{ \begin{array}{ll} 
1 & i=1,\ldots,n-1 \\
B - (n-1) & i=n,
\end{array} \right. 
\]
which is an extreme point of the optimal set for the convexified problem,
is optimal for the original problem as well.
That is, $x^\star$ is an extreme point of $M$ that satisfies Theorem \ref{thm-bound}
with equality.
\end{example}

\begin{example}[Nonconvex feasible set.]
For an even simpler example, consider the following univariate problem with no constraints.
Let $S = \{0\}\cup\{1\}$ with $f(x)=0$ for $x\in S$.
Then $\hat{f}: [0,1] \to \{0\}$, so the optimal set for the convexified problem consists of the entire interval $[0,1]$.
But $\hat{x} = 1/2 \in M$ is not feasible for the original problem;
its value according to the original objective is thus infinitely worse than 
the value guaranteed by Theorem \ref{thm-bound}.
On the other hand, $x=0$ and $x=1$, the extreme points of the optimal set for the convexified problem,
are indeed optimal for the original problem.
\end{example}

\section{Related work}

Our proof is very closely related to the Shapley-Folkman theorem \cite{Starr1969},
which states, roughly, that the nonconvexity of
the average of a number of nonconvex sets decreases with the number of sets.
In optimization, the analogous statement is that optimizing the average of a number of functions
is not too different from optimizing the average of the convex envelopes of those functions,
and the difference decreases with the number of functions.
However, we note that using the Shapley-Folkman theorem directly,
rather than its optimization analogue,
results in a bound that is slightly worse. 
For example, the Shapley-Folkman theorem has previously been used 
by Aubin and Ekeland in \cite{Aubin1976} 
to prove a bound on the duality gap.
The bound they present,
\[
p^\star - d^\star \leq \min(m+1,n) \rho(f_1),
\]
is not tight; our bound, which is tight, 
is smaller by a factor of $\tilde m/(m+1)$.

The Shapley-Folkman theorem has found uses in a number of applications within 
optimization. For example, Bertsekas et al. \cite{bertsekas1983} used the theorem
to solve a unit commitment problem in electrical power system scheduling, 
in which case the terms in the objective are univariate.
The Shapley-Folkman theorem and its relation to a bound on the duality gap
also have found applications in integer programming \cite{vujanic2014}.
While we restrict ourselves here only to nonconvex \emph{objectives},
many authors \cite{bertsekas1982,bertsekas1999,lemarechal2001} have studied 
convexifications of separable constraints as well.
A more modern treatment, in the case of linear programs, is given in \cite{bertsekas2009}.

The use of randomization to find approximate solutions to nonconvex problems
is widespread, and often startlingly successful \cite{motwani1995,goemans1995}.
The usual approach is to solve a convex problem to find an optimal probability distribution
over possible solutions; sampling from the distribution and rounding yields the 
desired result.
By contrast, our approach uses randomization only 
to explore the geometry of the optimal set \cite{skaf2010}. 
We rely on the insight that extremal points of the epigraph of the
convex envelope are likely to be closer in value to the original function,
and use randomization simply to reach these points.
Randomization allows us to find ``simplex-style'' corner points 
of the optimal set as solutions, 
rather than accepting interior points of the set.

Our procedure for finding an extreme point is closely related to
the idea of \emph{purifying} a solution returned by, \eg, an interior point solver to
obtain an extremal solution. 
One fixes an active set of inequality constraints that hold with equality at a given point,
and solves (\ref{eq-lp}) subject to the additional constraint that all inequality constraints in the active set
continue to hold with equality, and then iterates this procedure until the set of active constraints
completely determines the solution.
It is easy to see that at each iteration at least one constraint is added to the active set.
Hence the procedure converges to an extreme point
in no more than $\tilde m$ iterations.
In contrast, our proof shows that the method finds an extreme point with probability 1 in a single iteration,
without fixing an active set beforehand.

The notion that extreme points of the solution set of a convex problem have 
particularly nice properties is pervasive in the literature.
The extreme points produced by solving \ref{eq-lp} are simply
\emph{basic feasible solutions}, familiar from the analysis of the simplex method,
whenever the functions $f_i$ are univariate, \ie, $n_i = 1$, $i=1,\ldots,n$.
Other uses of extreme points abound:
for example, Anderson and Lewis \cite{anderson1989} propose a simplex-style method for 
semi-infinite programming that proceeds by finding extreme points of the 
feasible set;
and Barvinok \cite{barvinok1995,barvinok2002} and 
Pataki \cite{pataki1996,pataki1998} examine
the extreme points of an affine section of the semidefinite cone
to provide bounds on the rank of solutions to semidefinite programs.










\section{Constructing the convex envelope}
\label{s-envelopes}

In general, the convex envelope of a function can be hard to compute.
But in many special cases, we can efficiently construct the convex envelope
or a close approximation to it.
The problem of computing convex lower bounds on general nonconvex functions 
has been extensively studied in the global optimization community:
see, eg, \cite{horst2000} for a general introduction and 
\cite{tawarmalani2002} for a more sophisticated treatment.
In this section, we give a few examples illustrating how to construct the convex envelope
for a number of interesting functions and classes of functions.

\paragraph{Sigmoidal functions.}
A continuous function $f:[l,u]\to \reals$
is defined to be \emph{sigmoidal} if it is either convex, concave, or
convex for $x \leq z \in [l,u]$ and concave for $x \geq z$.
For a sigmoidal function,
the convex envelope is particularly easy to calculate \cite{udell2013}.
We can write $\hat{f}$ of $f$ piecewise as
\[
\hat{f}(x) = 
\left\{
\begin{array}{ll}
f(x) & l \leq x \leq w \\
f(w) + \frac{f(u) - f(w)}{u-w}(x-w) & w \leq x \leq u \\
\end{array}
\right.
\] 
for some appropriate $w\leq z$. 
If $f$ is differentiable, then $f'(w) = \frac{f(u) - f(w)}{u-w}$;
in general, $\frac{f(u) - f(w)}{u-w}$ is a subgradient of $f$ at $w$.
The point $w$ can easily be found by bisection: 
if $x > w$, then the line from $(x,f(x))$ to $(u,f(u))$ crosses the graph 
of $f$ at $x$; 
if $x < w$, it crosses in the opposite direction.

\paragraph{Univariate functions.}
If the inflection points of the univariate function are known,
then the convex envelope may be calculated by iterating the construction
given above for the case of sigmoidal functions.

\paragraph{Analytically.} Occasionally the convex envelope may be calculated 
analytically.
For example, convex envelopes of multilinear functions
on the unit cube are polyhedral (piecewise linear), 
and can be calculated using an analytical formula
given in \cite{rikun1997}.
A few examples of analytically tractable convex envelopes are presented in Table \ref{t-examples}.
In the table, $\hat{f}: \conv (S) \to \reals $ is the convex envelope of $f: S \to \reals$, 
and $\rho (f)$ gives the nonconvexity of $f$.
We employ the following standard notation: 
$\card(x)$ denotes the cardinality (number of nonzeros) of the vector $x$; 
the spectral norm (maximum singular value) is written as $\|M\|$,
and its dual, the nuclear norm (sum of singular values) is written as $\|M\|_*$.


\paragraph{Via differential equations.} The convex envelope of a function
can also be written as the solution to a certain nonlinear partial differential
equation \cite{oberman2007},
and hence may be calculated numerically using the standard machinery of
numerical partial differential equations \cite{oberman2008}.

\begin{table}[htb!]
\label{t-examples}
\caption{Examples of convex envelopes.}
\begin{tabular}{c|c|c|c}
$S$ & $f(x)$ & $\hat{f}(x)$ & $\rho(f)$ \\
\hline
$[0,1]^2$ & $\min(x,y)$ & $(x+y-1)_+$ & 1/2\\
$[0,1]^2$ & $xy$ & $(x+y-1)_+$ & 1/4\\
$[0,1]^n$ & $\min(x)$ & $(\sum_{i=1}^n x_i - (n-1) )_+$ & $\frac{n-1}{n}$\\
$[0,1]^n$ & $\prod_{i=1}^n x_i$ & $(\sum_{i=1}^n x_i - (n-1) )_+$ & $(\frac{n-1}{n})^n$\\
$[-1,1]^n$ & $\card(x)$ & $\|x\|_1$ & $n$\\
$\{M \in \reals^{k \times n}: \|M\| \leq 1\}$ & $\rank(M)$ & $\|M\|_*$ & $n$\\
\end{tabular}
\end{table}

\section{Examples} \label{s-examples}

\paragraph{Resource allocation.}
An agent wishes to allocate resources to a collection of projects $i=1,\ldots,n$.
For example, the agent might be bidding on a number of different auctions,
or allocating human and capital resources to a number of risky projects.
There are $m$ different resources to be allocated to the projects, with each project $i$
receiving a non-negative quantity $x_{ij}$ of resource $j$.
The probability that project $i$ will succeed is modeled as $f_i(x_i)$,
and its value to the agent, if the project is successful, is given by $v_i$.
The agent has access to a quantity $c_j$ of resource $j$, $j=1,\ldots,m$.
An allocation is feasible if $\sum_{i=1}^n x_{ij} \leq c_j$, $j=1,\ldots,m$.
The agent seeks to maximize the expected value of the successful projects by solving
\[
\begin{array}{ll}
\mbox{maximize} & \sum_{i=1}^n v_i f_i(x_i) \\
\mbox{subject to} & \sum_{i=1}^n x_{ij} \leq c_j,\quad j=1,\ldots,m\\
& x\geq 0.
\end{array}
\]
To conform to our notation in the rest of this paper, we write this as a minimization problem, 
\[
\begin{array}{ll}
\mbox{minimize} & \sum_{i=1}^n -v_i f_i (x_i) \\
\mbox{subject to} & \sum_{i=1}^n x_{ij} \leq c_j,\quad j=1,\ldots,m\\
& x\geq 0.
\end{array}
\]
Here, there are $m$ complicating constraint connecting the variables.
Hence the bound from Theorem \ref{thm-bound} guarantees that
$|\hat{p} - p^\star| \leq \sum_{i=1}^{\min(m,n)} \rho(f_i)$.
If $p_i: \reals \to [0,1]$ is a probability, then $\rho (-v_i p_i) \leq v_i$.
For example, if there is only one resource ($m=1$), 
the bound tells us that we can find a solution $x$ by solving 
the convex problem \ref{eq-lp} whose value differs from the true optimum $p^\star$
by no more than $\max_i v_i$, regardless of the number of projects $n$.

\paragraph{Flow and admission control.}

A set of flows pass through a network over given paths of links or edges;
the goal is to maximize the total utility while respecting
the capacity of the links.
Let $x_i$ denote the level of each flow $i=1,\ldots,n$ and
$u_i(x_i)$ the utility derived from that flow.
Each link $j$, $j=1,\ldots,m$, is shared by the flows $i \in S_j$,
and can accomodate up to a total of $c_j$ units of flow.
The flow routes are defined by a matrix $A \in \reals^{m \times n}$ 
mapping flows onto links,
with entries $a_{ji}$, $i=1,\ldots,n$, $j=1,\ldots,m$.
When flows are not split, \ie, they follow simple paths,
we have $a_{ij} = 1$ when flow $i$ pass over link $j$, and
$a_{ij}=0$ otherwise.  But it is also possible to split a flow across
multiple edges, in which case the entries $a_{ij}$ can take other 
values.
The goal is to maximize the total utility of the flows,
subject to the resource constraint,
\BEQ
\label{eq-flow}
\begin{array}{ll}
\mbox{maximize} & \sum_{i=1}^n u_i(x_i) \\
\mbox{subject to} & Ax \leq c\\
& x\geq 0.
\end{array}
\EEQ

The utility function is often modelled by a bounded
function, such as a sigmoidal 
function \cite{udell2013,Fazel2005}. 
As an extreme case, we can consider utilities of the form
\[
u(x) = \left\{
        \begin{array}{ll}
            0 & \quad x < 1 \\
            1 & \quad x \geq 1.
        \end{array}
    \right. 
\] 
Thus each flow has value $1$ when its level is at least $1$, and no value
otherwise.  In this case, the problem is to determine choose the 
subset of flows, of maximum cardinality, that the network can handle.
(This problem is also called admission control, since we are deciding which 
flows to admit to the network.)

We can replace this problem with an equivalent minimization problem to facilitate
the use of Theorem~\ref{thm-bound}. 
Let $f_i(x) = -u_i(x)$. Then we minimize the negative utility of the flows by solving
\[
\begin{array}{ll}
\mbox{minimize} & \sum_{i=1}^n u_i(x_i) \\
\mbox{subject to} & Ax \leq c\\
& x\geq 0.
\end{array}
\]
Suppose $f_i$ is bounded for every $i$, so that $\max_i \rho(f_i) \leq R$.
Then the bound from Theorem \ref{thm-bound} guarantees that we can quickly find a solution
$p^\star - \hat{p} \leq mR$.
In a situation with many flows but only a modest number of links,
the solution given by solving \ref{eq-lp} may be very close to optimal.



\section{Proofs} \label{s-proofs}

To simplify the proofs in this section, 
we suppose without loss of generality
that the problem has only inequality constraints;
the mathematical argument with equality constraints is exactly the same.
Merely note below in Lemma~\ref{thm-extreme-active} that equality constraints are always active.
We let $A = [A_1 \cdots A_n]$ with $A_i \in \reals^{m \times n_i}$,
so $Ax = \sum_i A_i x_i$. As before, $N = \sum_{i=1}^n n_i$.

\subsection{Definitions}
First, we review some basic definitions from convex analysis
(see \cite{rockafellar1970,lemarechal2001} for more details).

The \emph{epigraph} of a function $f$ is the set of points lying above the graph of $f$,
\[
\epi(f) = \{(x,t) : t \geq f(x)\}.
\]
The \emph{convex hull} of a set $S$ is the set of points that can be
written as a convex combination of other points in the set,
\[
\conv (S) = \left\{ \textstyle \sum_j \theta_j x_j : \theta_j \geq 0, ~
x_j \in S, ~ \sum_j \theta_j = 1 \right\}.
\]

An \emph{exposed face} $F$ of a convex set $C$ is a set of points 
optimizing a linear functional over that set,
\[
F = \argmin_{x \in C} c^T x,
\]
for some $c \in \reals^n$. The vector $c$ is called a normal vector to the face.
We will use the fact that every exposed face is a \emph{face}:
a convex set $F \subset C$ for which every (closed) line segment in $C$ with a relative interior point in $F$ has both endpoints in $F$. 
Not all faces are exposed; but our analysis will not make use of this distinction. 

An \emph{extreme point} of a convex set is a point that cannot be written as a convex combination
of other points in the set.
It is easy to see that a zero-dimensional exposed face of a convex set is an extreme point,
and that any extreme point defines a zero-dimensional exposed face of a convex set \cite{rockafellar1970}.

\subsection{Main lemmas}
\label{s-proof}
Our analysis relies on two main lemmas.
Lemma \ref{thm-extreme-same} tells us that at the extreme points of an exposed face of $\epi(\hat f)$,
the values of $f$ and $\hat{f}$ are the same.
Lemma \ref{thm-extreme-lp} tells us that (with probability one) we can find a point that is 
extreme in $\epi(\hat f_i)$ for most $i$, 
and feasible, by solving a randomized convex program.
We then combine these two lemmas to prove Theorem \ref{thm-construct} and,
as a consequence, Theorem \ref{thm-bound}.

We use two other technical lemmas as ingredients in the proofs of the two main lemmas.
Lemma \ref{thm-closed} gives conditions under which the convex hull of the epigraph
of a function is closed,
and Corollary \ref{thm-well-posed-linear-compact} states that the maximum of a random
linear functional over a compact set is unique with probability one.
Their statements and proofs can be found in Appendix \ref{s-closed} and 
Appendix \ref{s-well-posed} respectively.

We begin by finding a set of points where $f$ and $\hat{f}$ agree.
\begin{lemma}
\label{thm-extreme-same}
Let $S \subset \reals^n$ be a compact set, and 
let $f: S \to \reals$ be lower semi-continuous on $S$,
with convex envelope $\hat{f}: \conv (S) \to \reals$.
Let $c\in \reals^n$ be a given vector.
If $x$ is extreme in the set $\argmin (\hat{f}(x) + c^T x)$,
then $x \in S$ and $f(x) = \hat{f}(x)$.
\end{lemma}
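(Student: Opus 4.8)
The plan is to lift the problem from $x$-space into the epigraph. Write $p = \min_{x \in \conv(S)}(\hat f(x) + c^T x)$ for the optimal value and $M = \argmin(\hat f(x) + c^T x)$ for the optimal set in the statement. Minimizing $\hat f(x) + c^T x$ over $\conv(S)$ is equivalent to minimizing the linear functional $(x,t) \mapsto t + c^T x$ over the epigraph $\epi(\hat f)$, since at any minimizer the epigraph constraint $t \ge \hat f(x)$ must be tight. Consequently the set $F = \argmin_{(x,t) \in \epi(\hat f)}(t + c^T x)$ is exactly the graph $\{(x, \hat f(x)) : x \in M\}$, and by definition $F$ is an exposed face of $\epi(\hat f)$ with normal vector $(c,1)$.

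Next I would relate extreme points across this correspondence. Since $\hat f(x) + c^T x = p$ on $M$, the function $\hat f$ is affine on $M$, so the map $x \mapsto (x, \hat f(x)) = (x, p - c^T x)$ is an affine bijection from $M$ onto $F$. Affine bijections preserve extreme points, so if $x$ is extreme in $M$ then $(x, \hat f(x))$ is extreme in $F$. Because every exposed face is a face (as recalled in the definitions), and an extreme point of a face of a convex set is automatically an extreme point of the whole set---if such a point were the relative interior point of a segment in $\epi(\hat f)$, the face property would force both endpoints into $F$, and extremality within $F$ would then collapse the segment---I conclude that $(x, \hat f(x))$ is an extreme point of $\epi(\hat f)$.

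The crucial step, and the main obstacle, is to identify $\epi(\hat f)$ with the (un-closed) convex hull $\conv(\epi(f))$. In general $\epi(\hat f) = \overline{\conv(\epi(f))}$, and I would invoke Lemma~\ref{thm-closed} to remove the closure: under the hypotheses that $S$ is compact and $f$ is lower semi-continuous, the set $\conv(\epi(f))$ is already closed, so $\epi(\hat f) = \conv(\epi(f))$ exactly. This is what makes the argument go through, since extreme points behave well only for the finite convex hull; without closedness an extreme point could be a boundary limit not lying in $\epi(f)$ itself.

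Finally I would cash in the extremality. Every point of $\conv(\epi(f))$ is a finite convex combination of points of $\epi(f)$, and an extreme point cannot be written as a nontrivial such combination, so it must itself lie in $\epi(f)$. Applying this to $(x, \hat f(x))$ gives $(x, \hat f(x)) \in \epi(f)$, which means $x \in S$ and $f(x) \le \hat f(x)$. Combining with the defining inequality $\hat f(x) \le f(x)$ of the convex envelope yields $f(x) = \hat f(x)$ together with $x \in S$, as claimed.
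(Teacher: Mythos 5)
Your proof is correct and follows essentially the same route as the paper's: pass to the exposed face of $\epi(\hat f)$ determined by $(c,1)$, deduce that $(x,\hat f(x))$ is extreme in $\epi(\hat f)$, invoke Lemma~\ref{thm-closed} to identify $\epi(\hat f)$ with $\conv(\epi(f))$, and conclude that the extreme point lies in $\epi(f)$. The only difference is that you spell out in full the two facts the paper cites to Rockafellar (that extremality transfers from the argmin set to the epigraph, and that extreme points of a convex hull lie in the original set), which is a fine and self-contained elaboration.
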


\begin{proof}
The vector $c$ defines an (exposed) face 
$\{(y, \hat{f}(y)) \mid y \in \argmin (\hat{f}(x) + c^T x)\}$
of $\epi(\hat{f})$.
If $x$ is extreme in $\argmin (\hat{f}(x) + c^T x)$,
then $(x,\hat{f}(x))$ is extreme in $\epi(\hat{f})$ \cite[p. 163]{rockafellar1970}.

It is easy to see geometrically that every extreme point of $\epi(\hat{f})$
is a point in $\epi(f)$.
Formally, recall that the convex envelope satisfies 
$\epi(\hat{f}) = \cl (\conv (\epi(f)))$ \cite[cor. 12.1.1]{rockafellar1970}.
Then use Lemma \ref{thm-closed} (see Appendix \ref{s-closed}),
which states that the $\conv (\epi(f))$ is closed if $S$ is compact and 
$f$ is lower semi-continuous, to see that $\cl (\conv (\epi(f))) = \conv (\epi(f))$.
Thus every extreme point of $\epi(\hat{f})$
is a point in $\epi(f)$ \cite[cor. 18.3.1]{rockafellar1970}.

So $(x,\hat{f}(x)) \in \epi(f)$, and hence $x \in S$ and $\hat{f}(x) \geq f(x)$.
But $\hat{f}$ is the convex envelope of $f$, so $\hat{f}(x) \leq f(x)$.
Thus $\hat{f}(x) = f(x)$.
\myqed
\end{proof}

Now we show that a solution to a randomized convex program 
finds a point that is extreme for most subvectors $x_i$ of $x$.
\begin{lemma}
\label{thm-extreme-lp}
Let $M_i \in \reals^{n_i}$, $i=1,\ldots,n$, be given compact convex sets, and let
$A \in R^{m \times N}$ with $N = \sum_{i=1}^n n_i$.
Choose $w \in \reals^{N}$ uniformly at random on the unit sphere,
and consider the convex program
\BEA
\label{eq-lp-set}
\begin{array}{ll}
\mbox{minimize} & w^T x \\
\mbox{subject to} & A x \leq b \\
                  & x_i \in M_i, \quad i=1,\ldots,n.
\end{array}
\EEA
Almost surely, the solution $x$ to problem~(\ref{eq-lp-set}) is unique.
For all but at most $\tilde m$ indices $i$, $x_i$ is an extreme point of $M_i$.
\end{lemma}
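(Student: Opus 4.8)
The plan is to treat the two assertions separately, handling uniqueness quickly and then bounding the number of non-extreme blocks by a tangent-space dimension count tied to the active inequality constraints. The feasible set $C = \{x : Ax \le b,\ x_i \in M_i,\ i=1,\ldots,n\}$ is the intersection of the polyhedron $\{x : Ax \le b\}$ with the compact convex product $M_1\times\cdots\times M_n$, hence compact and convex. Minimizing the random linear functional $w^Tx$ over $C$ is therefore a well-posed problem whose minimizer is unique with probability one by Corollary~\ref{thm-well-posed-linear-compact}. I condition on this probability-one event and let $x^\star$ denote the unique solution.

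For the second claim, let $I$ be the set of indices $i$ for which $x_i^\star$ is not an extreme point of $M_i$; the goal is to show $|I| \le \tilde m$. For each $i \in I$, since $x_i^\star$ fails to be extreme it lies in the relative interior of a nondegenerate segment of $M_i$, so there is a direction $d_i \neq 0$ with $x_i^\star \pm t d_i \in M_i$ for all sufficiently small $t > 0$. Let $\bar d_i \in \reals^N$ be the vector whose $i$-th block equals $d_i$ and whose other blocks vanish, and set $V = \operatorname{span}\{\bar d_i : i \in I\}$. Because the $\bar d_i$ have disjoint block supports and each $d_i \neq 0$, they are linearly independent, so $\dim V = |I|$. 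By construction, $x^\star + v \in M_1\times\cdots\times M_n$ for every $v \in V$ of sufficiently small norm.

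The heart of the argument is to show that $v \mapsto A_J v$ is injective on $V$, where $J = J(x^\star)$ is the active set and $A_J$ collects the corresponding rows of $A$. I argue by contradiction: suppose $v \in V$, $v \neq 0$, with $A_J v = 0$. Then for small $t > 0$ both $x^\star \pm tv$ are feasible, since they remain in the product $M_1\times\cdots\times M_n$, the inactive inequalities stay strict, and the active inequalities are preserved because $A_J(x^\star \pm tv) = b_J$. If $w^Tv = 0$ these two feasible points share the optimal value $w^Tx^\star$, contradicting uniqueness; if $w^Tv \neq 0$ one of them strictly improves the objective, contradicting optimality. Hence $A_J$ is injective on $V$, and so $|I| = \dim V \le \rank(A_J) \le |J(x^\star)| \le \tilde m$, the last inequality being the definition $\tilde m = \tilde m_1 = \max_x |J(x)|$ in the inequality-only setting of this section.

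The main obstacle, and the place to argue carefully, is the feasibility-preservation step: I must verify that perturbing along a direction of $V$ keeps each block inside $M_i$ (which needs the bidirectional segment guaranteed by non-extremality, and a single small $t$ that works simultaneously for all $i \in I$), while simultaneously keeping the active and inactive inequality structure intact. The clean dichotomy on the sign of $w^Tv$, together with the almost-sure uniqueness from Corollary~\ref{thm-well-posed-linear-compact}, is what converts the existence of a nonzero direction in $\ker A_J \cap V$ into a contradiction; it is essential that uniqueness, and not merely optimality, be invoked in order to rule out the $w^Tv = 0$ case.
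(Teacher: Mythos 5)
Your proof is correct and follows essentially the same route as the paper's: almost-sure uniqueness via Corollary~\ref{thm-well-posed-linear-compact}, block-disjoint two-sided perturbation directions at the non-extreme blocks, and the observation that any such direction in $\nullspace(A_J)$ would contradict either optimality or uniqueness depending on the sign of $w^Tv$. The only difference is packaging: you bound $|I|$ by showing $A_J$ is injective on the span $V$ of your chosen directions, whereas the paper takes a ball in the minimal face of $M_1\times\cdots\times M_n$ containing $x^\star$ and counts dimensions of its intersection with $x^\star+\nullspace(A_J)$; these are the same dimension argument.
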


To prove Lemma~\ref{thm-extreme-lp}, we will prove the following stronger lemma.
Lemma~\ref{thm-extreme-lp} follows as a corollary, 
since $\tilde m$ bounds the number of simultaneously active constraints.

\begin{lemma}
\label{thm-extreme-active}
Let $M_i \in \reals^{n_i}$, $i=1,\ldots,n$, be given compact convex sets, and let
$A \in R^{m \times N}$ with $N = \sum_{i=1}^n n_i$.
Choose $w \in \reals^{N}$ uniformly at random on the unit sphere,
and consider the convex program
\BEA
\label{eq-lp-set-active}
\begin{array}{ll}
\mbox{minimize} & w^T x \\
\mbox{subject to} & A x \leq b \\
                  & x_i \in M_i, \quad i=1,\ldots,n.
\end{array}
\EEA
Almost surely, the solution $x$ to problem~(\ref{eq-lp-set-active}) is unique.
Let $J = \{j : (Ax - b)_j = 0\}$ be the set of active constraints at $x$.
For all but at most $|J|$ indices $i$, $x_i$ is an extreme point of $M_i$.
\end{lemma}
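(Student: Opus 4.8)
The plan is to establish the two claims separately: first uniqueness of the minimizer almost surely, then the bound on the number of non-extreme blocks. Uniqueness follows directly from Corollary~\ref{thm-well-posed-linear-compact}: the feasible set $\{x : Ax \leq b,\ x_i \in M_i\}$ is compact (a bounded intersection of the compact product $\prod_i M_i$ with the polyhedron $Ax \leq b$) and convex, so minimizing a random linear functional $w^Tx$ over it has a unique solution with probability one. So the real work is the structural claim about extreme points of the blocks, and I would condition on being in the probability-one event where the solution $x$ is unique.

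\medskip

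\noindent\textbf{The main argument.} Fix the unique optimal $x$ and let $J$ be its active set. The idea is a dimension-counting / contradiction argument at the level of each block. Suppose, for the purpose of contradiction, that strictly more than $|J|$ indices $i$ have the property that $x_i$ is \emph{not} an extreme point of $M_i$. For each such index, since $x_i$ is not extreme, there is a nonzero direction $d_i$ with $x_i \pm t d_i \in M_i$ for small $t > 0$ (a line segment in $M_i$ through $x_i$). Assemble these into a direction $d \in \reals^N$ supported on the non-extreme blocks, living in a subspace of dimension at least $(\text{number of non-extreme blocks})$, hence of dimension $> |J|$. I would then intersect this ``block-feasible'' subspace of perturbation directions with the constraint that the active inequalities stay active, i.e.\ $A_J d = 0$, where $A_J$ is the submatrix of rows indexed by $J$. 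Since $A_J$ has only $|J|$ rows, the solution space of $A_J d = 0$ restricted to our subspace of dimension $> |J|$ is nonzero, so there is a nonzero feasible perturbation direction $d$ with $x \pm t d$ feasible for small $t$.

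\medskip

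\noindent\textbf{Closing the contradiction.} Given such a $d$, optimality of $x$ forces $w^Td \geq 0$ and $w^T(-d) \geq 0$, hence $w^Td = 0$; but then both $x + td$ and $x - td$ are feasible with the same objective value as $x$, contradicting uniqueness. This is exactly where the randomness of $w$ is not even needed for this step — uniqueness (already secured almost surely) does the job. The honest bookkeeping to watch is that when I assemble the $d_i$ into $d$, the inactive constraints $j \notin J$ satisfy $(Ax - b)_j < 0$ strictly, so they remain satisfied for $t$ small regardless of $A d$; only the active rows $A_J$ impose a genuine linear restriction, which is why only $|J|$ degrees of freedom get consumed.

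\medskip

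\noindent\textbf{The expected obstacle.} I anticipate the subtle point is the dimension count when the blocks $n_i > 1$: a single non-extreme block may contribute more than one independent perturbation direction, and conversely I only need one direction per non-extreme block, so I must argue carefully that the number of \emph{blocks} that are non-extreme (not the total dimension of wiggle room) is what trades off against $|J|$. The cleanest route is to pick exactly one segment direction $d_i$ per non-extreme block, giving a perturbation subspace of dimension exactly equal to the number of such blocks, and then apply rank-nullity to $A_J$ on that subspace: if that count exceeds $|J|$, a nonzero kernel direction exists. This keeps the bound at $|J|$ rather than something weaker. Once this is in hand, Lemma~\ref{thm-extreme-lp} follows immediately because $|J| \leq \tilde m$ by the definition of $\tilde m$ as the maximal number of simultaneously active constraints.
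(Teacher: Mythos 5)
Your proposal is correct and follows essentially the same route as the paper: almost-sure uniqueness from the random linear objective over a compact set, followed by a dimension count that pits one independent (block-supported, hence mutually independent) two-sided perturbation direction per non-extreme block against the $|J|$ linear conditions $A_J d = 0$, with uniqueness ruling out any surviving feasible direction. The only difference is presentational --- you argue by contradiction via rank--nullity, while the paper runs the same count forward by intersecting a ball in the minimal face of $\prod_i M_i$ containing $x$ with a disk in $x + \nullspace(A_J)$ --- and your handling of the anticipated obstacle (one direction per block, not per dimension) matches the paper's use of the mutually orthogonal $y^i - x$.
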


\begin{proof}[\proofof Lemma~\ref{thm-extreme-active}]
By Corollary \ref{thm-well-posed-linear-compact} (see Appendix~\ref{s-well-posed}), the minimum of a random
linear functional over a compact set is unique with probability one.
Hence we may suppose problem~(\ref{eq-lp-set}) has a unique solution, which we call $x$,
with probability one.
Define $M = M_1 \times \cdots \times M_n$ to be the Cartesian product of the sets $M_i$.
Let $F$ be a minimal face of $M$ containing $x$, 
and let $B \subset F \subseteq M$ be a ball in its relative interior.
If $x$ is on the boundary of $M$, then $\dim(B) < N$.

Let $A_J$ be a matrix consisting of those rows of $A$ with indices in $J$,
and define the minimal distance to any non-active constraint
\[
\delta = \inf_{j\in J^C} \inf_{y: (Ay - b)_j = 0} \|x - y\|.
\]
Let $D = (x + \nullspace(A_J)) \cap \mathcal{B}(x,\delta)$ where
$\mathcal{B}(x,\delta)$ is an open ball around $x$ with radius $\delta$.
With this definition, any $y \in D$ satisfies the constraints $Ay - b$ 
with the same active set $J$:
$(Ay - b)_j = 0$ for every $j \in J$, and $(Ay - b)_j > 0 $ for every $j \in J^C$.
Note that $\dim(D) = \dim(\nullspace(A_J)) = N - |J|$.

Now we will show $B \cap D = \{x\}$.
By way of contradiction, consider $y \in B \cap D$, $y \ne x$.
Every such $y$ is feasible for problem~(\ref{eq-lp-set}).
The random vector $w$ must be orthogonal to $y-x$,
for otherwise the solution to problem~(\ref{eq-lp-set}) could not occur at the center $x$ 
of the feasible ball $B$.
On the other hand, if $w$ is orthogonal to $y-x$, 
then $y$ is a solution to problem~(\ref{eq-lp-set}).
But the solution $x$ is unique, so it must be that $B \cap D = \{x\}$.
That is, $B$ intersects the $(N-|J|)$-dimensional set $D$ at a single point.
This bounds the dimension of $B$: $\dim(B) + \dim(D) \leq N$,
so $\dim(B) \leq |J|$.

Furthermore, $\dim(B)$ bounds the number of 
subvectors $x_i$ of $x$ that are not extreme in $M_i$.
Let 
\[
\Omega = \left\{i \in \{1,\ldots,n\} : x_i \mbox{ is not extreme in } M_i\right\}.
\]
For $i \in \Omega$, $x_i$ lies on a face of $M_i$ with dimension greater than zero.
Hence $B$ contains a point $y^i$ that differs from $x$ only on the $i$th coordinate block. 
Consider the set $Y = \{y^i : i \in \Omega\} \subset B$.
The vectors $y^i - x$ for $i \in \Omega$ are mutually orthogonal, so
$|\Omega| = \dim(\conv(Y)) \leq \dim(B)$.
The argument in the last paragraph showed $\dim(B) \leq |J|$,
and so we can bound the number of subvectors that are not extreme $|\Omega| \leq |J|$.

Thus almost surely, the solution to problem~(\ref{eq-lp-set}) is unique, and
no more than $|J|$ subvectors $x_i$ of the solution $x$
are not at extreme points.
\myqed
\end{proof}


\subsection{Main theorems} 
We are now ready to prove the main theorems, using the previous lemmas.

\begin{proof}[\proofof Theorem~\ref{thm-construct}]
By Lemma \ref{thm-extreme-lp}, the solution $x^\star$ to 
\ref{eq-lp} is unique with probability 1.
Every point in the feasible set for \ref{eq-lp} is optimal for 
\ref{eq-primal-conv}, so in particular, $x^\star$ solves \ref{eq-primal-conv}.
Pick $\lambda^\star \geq 0$ so that $(x^\star,\lambda^\star)$ 
form an optimal primal-dual pair for 
the primal-dual pair (\ref{eq-primal-conv}, \ref{eq-dual}).
Note that by complementary slackness, 
any optimal point $x$ for \ref{eq-primal-conv} 
(and so any feasible point for \ref{eq-lp}) satisfies 
$\lambda^{\star T} (A x - b) = 0$.

Now consider the problem
\BEQ
\label{eq-lp-dual}
\begin{array}{ll}
\mbox{minimize} & w^T x\\
\mbox{subject to} & Ax \leq b \\
                  & \hat{f}(x) - \lambda^{^\star T} A x \leq \hat{p} - \lambda^{^\star T} A x^\star,
\end{array}
\EEQ
where, compared to \ref{eq-lp}, 
we have subtracted $\lambda^{\star T} A x$ and $\lambda^{\star T} A x^\star$ 
from the two sides of the inequality $\hat{f}(x) \leq \hat p$. 

In fact, the feasible set of \ref{eq-lp} is the same as that of problem~(\ref{eq-lp-dual}). 
By complementary slackness, $\lambda^{\star T} A x^\star = \lambda^{\star T} b$,
so the last inequality constraint in problem~(\ref{eq-lp-dual}) can be rewritten as
\[
\hat{f}(x) - \lambda^{^\star T} (A x - b) \leq \hat{p}.
\]
Since $\lambda^\star \geq 0$, 
and $A x - b \leq 0$ on the feasible set of problem~(\ref{eq-lp-dual}),
we have $- \lambda^{^\star T} (A x - b) \geq 0$. 
Hence any $x$ feasible for problem~(\ref{eq-lp-dual}) satisfies
\[
\hat{f}(x) \leq \hat{p},
\]
and so satisfies the constraints of \ref{eq-lp}.
Conversely, any feasible point for \ref{eq-lp} has $\lambda^{^\star T} (A x - b)=0$
by complementary slackness,
so it is also feasible for problem~(\ref{eq-lp-dual}).
Since the feasible sets are the same and the objectives are the same,
the solution to \ref{eq-lp} must also be the same as that of problem~(\ref{eq-lp-dual}). 

Define
\BEAS
M &=& \argmin_x \left( \textstyle \sum_{i=1}^n \hat{f}_i(x_i) - 
\lambda^{\star T} (A x - b)\right) \\
&=& 
\argmin_x \textstyle \sum_{i=1}^n \left( \hat{f}_i(x_i) - 
\lambda^{\star T} A_i x_i \right) - \lambda^{\star T} b.
\EEAS
The function defining the set $M$ is separable.
Hence $M = M_1 \times \cdots \times M_n$, where
\[
M_i = \argmin_{x_i} \left(\hat{f}_i(x_i) - \lambda^{\star T} A_i x_i \right).
\]
The set $M_i$ is compact and convex: 
it is bounded, since the domain of $\hat f_i$, $\conv(S_i)$, is bounded; 
it is closed, since $\epi(\hat{f}_i)$ is closed;
and it is convex, since $\epi(\hat{f}_i)$ is convex.
So the $M_i$ satisfy the conditions for Lemma~\ref{thm-extreme-lp}.

By Lemma \ref{thm-extreme-lp}, the solution $x^\star$ to problem~(\ref{eq-lp-dual}) is unique
and lies at an extreme point of $M_i$ for all but (at most) $\tilde m$ of the coordinate blocks $i$
(with probability one).
By Lemma \ref{thm-extreme-same}, extreme points $x_i$ of $M_i$ 
satisfy $f_i(x_i)=\hat{f}_i(x_i)$, 
so $f_i(\hat x_i) > \hat{f}_i(\hat x_i)$ for 
no more than $\tilde m$ of the coordinate blocks $i$.
On those blocks $i$ where $\hat x_i$ is not extreme, 
it is still true that $f_i(\hat x_i) - \hat{f}_i(\hat x_i) \leq \rho(f_i)$.
Hence
\[
0 \leq \sum_{i=1}^n f_i(x^\star_i) - p^\star
= \sum_{i=1}^n \left( f_i(x^\star_i) - \hat{f}_i(x^\star_i)\right)
\leq  \sum_{i=1}^{\min(\tilde m,n)} \rho(f_i).
\]
\myqed
\end{proof}

\begin{proof}[\proofof Theorem \ref{thm-bound}]
Since a point satisfying the bound in Theorem \ref{thm-bound} 
can be found almost surely by minimizing
a random linear function over $M$, it follows that such a point exists.
\myqed
\end{proof}

\section{Numerical example}

We now present a numerical example to demonstrate the performance of the algorithm
implied by the proof; namely, of finding an extreme point of the convexified problem
to serve as an approximate solution to the original problem. 
This problem is not large,
and is easy to solve using many methods. 
Our purpose in presenting the example
is merely to give some intuition for the utility of 
finding an extreme point of the solution set of the convexified problem,
rather than an arbitrary solution.

\paragraph{Investment problem.}

Consider the following investment problem.
Each variable $x_i \in \reals$ represents the allocation of capital to project $i$.
The probability that a project will fail is given by $f(x_i)$.

Entry $a_{ij}$ of the matrix $A \in \reals^ {m \times n}$ 
represents the exposure of project $i$ to sector $j$ of the economy.
The budget for projects in each sector is given by the vector $b \in \reals^m$.
The constraint $Ax \leq b$ then prevents overexposure to any given sector.

The problem of minimizing the expected number of failed projects 
subject to these constraints can be written as
\BEQ
\label{eq-invest}
\begin{array}{ll}
\mbox{minimize} & \sum_{i=1}^n f(x_i) \\
\mbox{subject to} & A x \leq b \\
& 0 \leq x.
\end{array}
\EEQ

We let
\[
f(x) = \left\{
        \begin{array}{ll}
            1 & \quad 0 \leq x < 1 \\
            0 & \quad x \geq 1.
        \end{array}
    \right. 
\]
Random instances of the investment problem are generated 
with $n$ variables and $m$ constraints.
Random sector constraints are generated by 
choosing entries of $A$ to be 
0 or 1 uniformly at random with probability 1/2,
and let $b = 1/2 A\ones$, where $\ones$ is the vector of all ones,
in order to ensure the constraints are binding.

The results of our numerical experiments are presented in Table~\ref{t-invest}
and Figure~\ref{f-invest}.
In the table, we choose $n=50$, $m=10$, 
let $\hat{x}$ be the solution to the problem
\BEQ
\label{eq-invest-conv}
\begin{array}{ll}
\mbox{minimize} & \sum_{i=1}^n \hat f(x_i) \\
\mbox{subject to} & A x \leq b \\
& 0 \leq x
\end{array}
\EEQ
returned by an interior point solver, and let $x^\star$ be the 
solution to the random LP \ref{eq-lp}.
The observed difference between $f(x^\star)$ and $p^\star$ is always substantially smaller than
the theoretical bound of $m \rho(f) = 10$.

Figure~\ref{f-invest} shows the improvement from solving \ref{eq-lp}, calculated as $\frac{f(x^\star) - f(\hat x)}{f(x^\star) - p^\star}$,
as a function of the number of variables $n$ and constraints $m$,
averaged over 10 random instances of the problem.
Solving the random LP \ref{eq-lp} gives a substantial improvement
when $m < n$.

\begin{table}[htb!]
\centering
\caption{\label{t-invest}
Investment problem.}
\begin{tabular}{l|l|l|l|l}
$f(x^\star)$ & $f(\hat{x})$ & $p^\star$ & $\hat{p}$ & \% improved\\
\hline
43.01 & 23.01 & 22.00 & 20.25 & 0.95\\
29.02 & 26.00 & 22.00 & 20.36 & 0.43\\
30.09 & 24.00 & 21.00 & 19.92 & 0.67\\
26.32 & 25.00 & 22.00 & 20.27 & 0.31\\
24.68 & 24.00 & 22.00 & 20.33 & 0.25\\
26.01 & 25.00 & 21.00 & 19.26 & 0.20\\
26.46 & 24.00 & 20.00 & 19.40 & 0.38\\
28.24 & 25.00 & 23.00 & 20.65 & 0.62\\
29.04 & 24.00 & 21.00 & 20.21 & 0.63\\
27.01 & 23.01 & 21.00 & 19.70 & 0.67\\
\end{tabular}
\end{table}

\begin{figure}
\centering

\begin{tikzpicture}
  \begin{axis}[view={0}{90},
               width=8cm,
               colorbar,
               colormap/blackwhite,
               xlabel=$m$,
               ylabel=$n$,]
    \addplot3[surf,domain=0:100,y domain=0:100] table {\home/invest2.tex};
  \end{axis}
\end{tikzpicture}

\caption{\label{f-invest}Improvement $\frac{f(x^\star) - f(\hat x)}{f(x^\star) - p^\star}$ on investment problem.}
\end{figure}
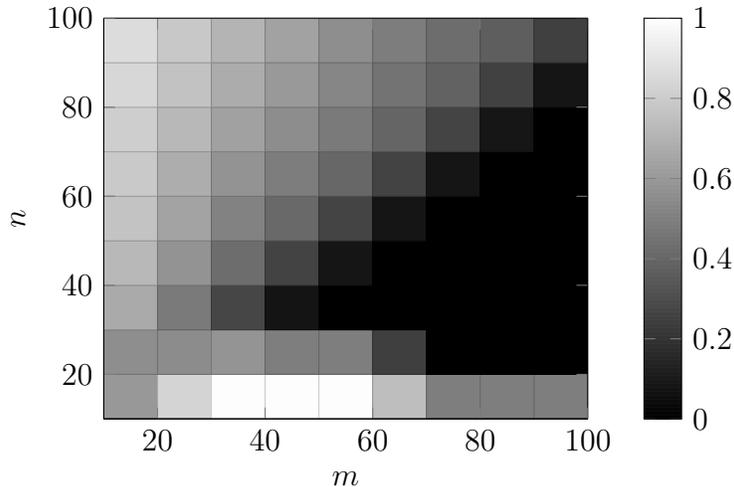



\subsection{Solution via ADMM}

Here we demonstrate how to use ADMM, a framework for distributed optimization,
to find $\hat{x}$ satisfying the bound on the duality gap.
This shows that a solution satisfying the bound may be found
even for very large scale problems, 
so long as the proximal operators of the functions $f_i$ can be evaluated efficiently.


\paragraph{ADMM.} The Alternating Directions Method of Multipliers (ADMM)
was introduced in 1975 by Glowinski and Marocco \cite{GlM:75} 
and Gabay and Mercier \cite{GaM:76},
and is closely related to a number of classical operator-splitting methods such as Douglas-Rachford and Peaceman-Rachford \cite{Gab:83,PR:55,LM:79,glowinski2014}.
ADMM has recently received renewed interest as a method for solving distributed optimization problems
due both to its ease of implementation and its robust convergence in practice
and in theory on convex problems \cite{Gab:83,FoG:83,FG:83,GLT:87,Tse:91,Fuk:92,EB:92,EF:93,CT:94,hong2012,he2012}.
For an introduction to ADMM, we refer the reader to the survey \cite{boyd2011} and references therein.

ADMM is not guaranteed to converge to the global solution when applied to a nonconvex problem
\cite{zhang2010,magnusson2014}.
However, its computational advantages still make ADMM a popular method 
for nonconvex optimization \cite{magnusson2014,derbinsky2013,chartrand2013,chartrand2012,bouaziz2013,gilimyanov2013,kanamori2012} even in the absence of convergence guarantees.
In constrast to this previous work, here we use ADMM to find a feasible point for the 
nonconvex problem which obeys the global error bound of Theorem~\ref{thm-bound}.

\paragraph{ADMM for the convexified problem.}
A generalized consensus ADMM iteration can be used to solve the convexified problem.
(See \cite{boyd2011} for details.)
We rewrite the problem as
\BEQ
\label{eq-primal-conv-consensus}
\begin{array}{ll}
\mbox{minimize} & \sum_{i=1}^n \hat{f}_i(x_i) + \ones_{Ax \leq b, Gx=h}(z)\\
\mbox{subject to} & x=z,
\end{array}
\EEQ
where $\ones_\mathcal{C}$ denotes the indicator function of the set $\mathcal C$.
An ADMM iteration solving the above problem is given by
\BEAS
x^k_i &=& \argmin \hat{f}_i(x) + \rho/2 \|x - z^{k-1}_i + y^{k-1}_i\|_2^2 \\
z^k &=& \Pi_{Ax \leq b, Gx=h} (x^k) \\
y^k_i &=& y^{k-1}_i + 1/\rho(x^k_i - z^k_i). 
\EEAS
Here, $\Pi_\mathcal{C}$ denotes projection onto the set $\mathcal C$,
and $\rho>0$ is a parameter.
Under some mild conditions \cite{hong2012},
the iterates $z^k$ and $x^k$ both converge linearly to
a primal optimal solution $x^\star$ for the convexified problem; 
$y^k$ converges to a dual optimal solution $\lambda^\star$ for the convexified problem.

This iteration requires very little communication between
nodes in a distributed system. This property may be very useful if it is
expensive to compute or to optimize the convex envelopes $\hat{f}_i$.
Each processor in the distributed architecture may perform the $x$ update 
for one block $i$ in parallel, with no need to communicate with other processors.
The only centralized computation is the projection of $x^k$ onto the constraints. 

However, we have already seen in \S\ref{s-counterexample}
that projecting a solution to the dual problem
onto the constraint set can work very poorly for nonconvex separable problems.
To understand this phenomenon better, consider a \emph{symmetric} problem,
which has the same $f_i$ for every $i=1,\ldots,n$, 
and constraint matrices $A$ and $G$ whose columns are identical.
ADMM will not break the symmetry between different coordinate blocks,
since the iteration above is completely symmetric, 
resulting in a symmetric solution to the convexified problem.
But we have seen in \S\ref{s-counterexample} that a symmetric solution
is the worst sort of solution; it can have an error that grows linearly with $n$.

\paragraph{ADMM for the randomized problem.}
We want a solution at an extreme point of the optimal set for the convexified
problem. Fortunately, it is also easy to compute the solution to
the randomized problem \ref{eq-lp}
using distributed optimization, which allows us to find a point $\hat{x}$
satisfying the bound in Theorem~\ref{thm-bound}.

Taking the primal and dual optimal pair $(x^\star,\lambda^\star)$
for (\ref{eq-primal-conv-consensus}) computed by the first round of ADMM iterations, we 
can rewrite problem~(\ref{eq-lp-dual}) in ADMM consensus form.
Let 
\[
M = \left\{x: \hat{f}(x)+ \lambda^{^\star T} A x \leq \hat{f}(x^\star)+ \lambda^{^\star T} A x^\star \right\}.
\]
We saw in \S\ref{s-proofs} that $M$ is separable, 
and can be written as $M = M_1 \times \cdots \times M_n$.
Hence we can rewrite \ref{eq-lp} as
\[
\label{eq-lp-consensus}
\begin{array}{ll}
\mbox{minimize} & \sum_{i=1}^n (w_i^T x_i + \ones_{M_i}(x_i)) 
+ \ones_{Ax \leq b, Gx=h}(z)\\
\mbox{subject to} & x=z,
\end{array}
\]
which gives rise to the ADMM consensus iteration
\BEAS
x^k_i &=& \argmin_{x \in M_i} w^T_i x + \rho/2 \|x - z^{k-1}_i + y^{k-1}_i\|_2^2 \\
z^k &=& \Pi_{Ax \leq b, Gx=h} (x^k) \\
y^k_i &=& y^{k-1}_i + 1/\rho(x^k_i - z^k_i). 
\EEAS
The solution $z$ produced by this distributed iteration will satisfy Theorem~\ref{thm-bound}.


\section*{Acknowledgements}
The authors thank Haitham Hindi, Ernest Ryu
and the anonymous reviewers 
for their very careful readings of and comments on early drafts of this paper,
and Jon Borwein and Julian Revalski for their generous advice
on the technical lemmas in the appendix.

\appendix
\section{The dual of the dual is the convexified problem}\label{s-dual-dual}

In this appendix, we prove that the dual of the dual of \ref{eq-primal}
is the convexified problem \ref{eq-primal-conv}.

Before we begin, note that the convex envelope has a close connection to duality.
Let $f^*(y) = \sup (y^T x - f(x)) = - \inf (f(x) - y^T x)$ 
be the (Fenchel) \emph{conjugate} of $f$.
Then $\hat f(x) = f^{**}(x)$ is the \emph{biconjugate} of $f$ \cite{rockafellar1970}.
The conjugate function arises naturally when taking the dual of a problem,
as we show below.
Hence it should come as no suprise that the biconjugate appears upon
taking the dual twice.

Below, 
we refer to the dual of the dual problem as the dual dual problem,
the dual function of the dual problem as the dual dual function,
and the variables in the dual dual problem as the dual dual variables.

Recall the primal problem, which we write as
\[
\begin{array}{ll}
\mbox{minimize} & f(x) = \sum_{i=1}^n f_i(x_i) \\
\mbox{subject to} & A x \leq b \\
& G x = h.
\end{array}
\]

We can write the Lagrangian of the primal problem as
\[
L (x,\lambda,\mu) = \sum_{i=1}^n f_i(x_i) + \lambda^T (A x - b) + 
\mu^T(G x - h),
\] 
with dual variables $\lambda \geq 0$ and $\mu$.
The dual function $g(\lambda, \mu)$ is the minimum of the Lagrangian over $x$,
\BEAS
g(\lambda, \mu) &=& \inf_x L (x,\lambda,\mu) \\
&=& \inf_x \sum_{i=1}^n f_i(x_i) + \lambda^T (A x - b) + 
\mu^T(G x - h) \\
&=& \sum_{i=1}^n \inf_{x_i} (f_i(x_i) - \gamma_i x_i) - \lambda^T b - \mu^T h \\
&=& \sum_{i=1}^n -f_i^*(\gamma_i) - \lambda^T b - \mu^T h, \\
\EEAS
where we have defined $\gamma = - A^T \lambda - G^T \mu$ in the second to last equality
and used the relation $f^*(y) = - \inf (f(x) - y^T x)$ in the last.

The dual problem is to maximize the dual function over $\mu$ and $\lambda$
with $\lambda \geq 0$:
\[
\begin{array}{ll}
\mbox{maximize} & \sum_{i=1}^n -f_i^*(\gamma_i) - \lambda^T b - \mu^T h\\
\mbox{subject to} & \gamma = - A^T \lambda - G^T \mu \\
& \lambda \geq 0.
\end{array}
\]
The conjugate function $f_i^*$ is a pointwise supremum of affine functions,
and so is always convex even if $f_i$ is not.
Hence the dual problem is a concave maximization problem. 

To take the dual of the dual, we perform exactly the same computations again
on the dual problem now instead of the primal.
The dual Lagrangian is 
\[
L_D (\lambda,\mu,\gamma,x,y) = \sum_{i=1}^n -f_i^*(\gamma_i) - \lambda^T b - \mu^T h
+ x^T(\gamma + A^T \lambda + G^T) + s^T \lambda,
\] 
with dual dual variables $s \geq 0$ and $x$.
We maximize the dual Lagrangian over the dual variables $\lambda$, $\mu$, and $\gamma$
to form the dual dual function
\BEAS
g_D(x, s) &=& \sup_{\lambda \geq 0, \mu, \gamma} L_D (\lambda,\mu,\gamma,x,y) \\
&=& \sup_{\lambda \geq 0,\mu,\gamma} \sum_{i=1}^n -f_i^*(\gamma_i) 
- \lambda^T b - \mu^T h
+ x^T(\gamma + A^T \lambda + G^T) + s^T \lambda\\
&=& \sup_{\lambda \geq 0,\mu} \sum_{i=1}^n f_i^{**}(x_i) 
+ \lambda^T (Ax + s - b) + \mu^T (Gx - h),
\EEAS
using now the relation $f^*(y) = \sup (y^T x - f(x))$.
This is finite only if $Ax + s - b \leq 0$ and $Gx - h = 0$.
So we see
\[
g_D(x, s) = \sum_{i=1}^n f_i^{**}(x_i)
\]
so long as these equalities are satisfied.

To form the dual dual problem, we minimize the dual dual function over
$x$ and $s \geq 0$: 
\[
\begin{array}{ll}
\mbox{minimize} & \sum_{i=1}^n f_i^{**}(x_i) \\
\mbox{subject to} & A x \leq b \\
& G x = h,
\end{array}
\]
where we have solved for $s = b - Ax$.
Hence we see that we have recovered the convexified problem 
by dualizing the primal twice.
\clearpage

\section{Well-posedness}
\label{s-well-posed}

The following theorem characterizes the set of vectors in the dual space for which 
linear optimization over a compact set $S$ is well-posed.

\begin{theorem}[Well-posedness of linear optimization]
\label{thm-well-posed}
Suppose $S$ is a compact set in $\reals^n$. Then the set of $w\in \reals^n$ for which 
the maximizer of $w^Tx$ over $S$ is not unique has (Lebesgue) measure zero.
\end{theorem}

This result is well-known; for example, it follows from \cite[\S2]{bolte2011},
taking into account that if $S \subseteq \reals^n$ is compact, then so is its
convex hull $K = \conv(S)$ and the set of extreme points of S and K coincide.
In fact, one can derive much stronger results using, for example, Alexandrov's
theorem for convex functions to show quadratic decay, or finite identifiability 
in the case of semialgebraic functions.
However, our purpose here is more modest; we merely prove the weaker result stated
as Theorem \ref{thm-well-posed} so that this paper may be self-contained.

Before proceeding to a proof, however, let us make sense of the statement of the theorem.
By definition, the maximizer of a linear functional over a set $S$ is a 
face $R$ of $S$. The maximizer is unique if and only if $R$ is a zero-dimensional
face (\ie, an extreme point).
Only an outward normal to a face will be maximized on that face. 

It is easy to see that the theorem is true for polyhedral sets $S$. 
For each face of the polyhedron that is not extreme, the set of vectors maximized by that face
(the set of outward normals to the face, \ie, the normal cone)
will have dimension \emph{smaller than} $n$.
A polyhedron has only a bounded number of faces, so the union of these sets
still has measure zero.

On the opposite extreme, consider the unit sphere. 
A sphere has an infinite number of faces.
But every face is extreme, and every vector $w$ has a unique maximizer.

The difficulty comes when we consider cylindrical sets:
those constructed as the Cartesian product of a sphere and a cube.
Here, every outward normal to the ``sides'' of the cylinder is a vector
whose maximum over the set is not extreme.
That is, we find an \emph{uncountably infinite} number of faces (parametrized by the 
boundary of the sphere)
that are not extreme points.

\begin{proof}
Let $I_S: \reals^n \to \reals$ be the indicator function of $S$.
$S$ is compact, so the convex conjugate $I^*_S(y) = \sup_x y^Tx - I_S(x)$
of $I_S$ is finite for every $y \in \reals^n$.
Rachemacher's Theorem \cite[Theorem 2.5.1]{borwein2010} states that 
a convex function $g:\reals^n \to \reals$ 
is differentiable almost everywhere with respect to Lebesgue measure on $\reals^n$.
Furthermore, if $I^*_S$ is differentiable at $y$ with $\nabla I^*_S(y) = x$, then 
$y^Tx - I_S(x)$ attains a strong maximum at $x$ \cite[Theorem 5.2.3]{borwein2010};
that is, there is a unique maximizer of $y^Tx$ over $S$.
\end{proof}

Clearly, the statement also holds for the minimizers, rather than maximizers, of $w^Tx$.

The following corollary will be used in the proof of the main theorem of this paper.

\begin{corollary}
\label{thm-well-posed-linear-compact}
Suppose $S$ is a compact set in $\reals^n$, and $w$ is a uniform random
variable on the unit sphere in $\reals^n$. Then with probability one, there
is a unique minimizer of $w^Tx$ over $S$.
\end{corollary}

\begin{proof}
The property of having a unique minimizer exhibits a symmetry along radial lines:
there is a unique minimizer of $w^Tx$ over $S$ if and only if
there is a unique minimizer of $(w/\|w\|_2)^Tx$ over $S$.
A uniform random vector on the unit sphere may be generated by taking a uniform random vector 
on the unit ball, and normalizing it to lie on the unit sphere.
Since the set of directions whose maximizers are not unique 
has Lebesgue measure zero,
the vectors on the unit sphere generated in this manner 
have maximizers that are unique with probability one.
\end{proof}

We give one last corollary, which may be of mathematical interest,
but is not used elsewhere in this paper.

\begin{corollary}
\label{cor-normal-cones}
Suppose $S$ is a compact set in $\reals^n$.
The union of the normal cones $N(x)$ of
all points $x \in S$ that are not extreme has measure zero.
\end{corollary}

\begin{proof}
A point $x$ minimizes $y^Tx$ over $S$ if and only if $y \in N(x)$.
A point $x$ is the only minimizer of $y^Tx$ over $S$ if and only if $x$ is exposed,
and hence extreme.
Hence no $y$ with a unique minimizer over $S$ lies in the normal cone of a point
that is not extreme.
Thus the union of the normal cones $N(x)$ of
all points $x \in S$ that are not extreme is a
subset of the vectors which do not have a unique maximizer over $S$,
and hence has measure zero.
\end{proof}

\clearpage
\section{Closure}
\label{s-closed}


The following lemma techical lemma will be useful in the main body of the paper.

\begin{lemma}
\label{thm-closed}
Let $S \subset \reals^n$ be a nonempty compact set, and 
let $f: S \to \reals$ be lower semi-continuous on $S$.
Then $\conv (\epi f)$ is closed.
\end{lemma}

This result follows from \cite[Thm. 4.6]{benoist1996}, 
since every function defined on a compact set is in particular 1-coercive.
The earliest proof known to the authors can be found in \cite[p. 69]{valadier1970};
for a simpler exposition, see \cite[Ch. X, \S1.5]{hiriart1996}.
\begin{boringproof}
Here, we provide a self-contained elementary proof for the curious reader.


\begin{proof}

Every point $(x,t) \in \cl (\conv (\epi f))$ is a limit of points 
$(x^k, t^k)$ in $\conv (\epi f)$.
These points can be written as
\[
(x^k, t^k) = \sum_{i=1}^{n+2} \lambda^k_i (a^k_i,b^k_i)
\]
with $\sum_{i=1}^{n+2} \lambda^k_i = 1$, $0 \leq \lambda^k_i \leq 1$,
and $(a^k_i,b^k_i) \in \epi(f)$.
Since $[0,1]$ and $S$ are compact, we can find a subsequence along which
each sequence $a^k_i$ converges to a limit $a_i \in S$, and 
each sequence $\lambda^k_i$ converges to a limit $\lambda_i \in [0,1]$.
 
Let $P = \{i : \lambda_i > 0$\}.  
Note that $P$ is not empty, since 
$\sum_{i=1}^{n+2} \lambda^k_i = 1$ for every $k$.
If $l \in P$, then because the limit $t$ exists, $\limsup_k b_i^k$ is bounded above.  
Recall that a lower semi-continuous function is bounded below on a compact domain,
so $b_i^k$ is also bounded below. 
This shows that for $i \in P$, every subsequence of $b_i^k$ has a subsequence that converges to a limit $b_i$.
In particular, we can pick a subsequence $k_j$ 
such that simultaneously, for $i=1,\ldots,n+2$, $a_i^{k_j}$, $b_i^{k_j}$, and $\lambda_i^{k_j}$
converge along the subsequence $k_j$ to $a_i$, $b_i$, and $\lambda_i$, respectively.

Define $S_P = \sum_{i\in P} \lambda_i b_i$.
Then along the subsequence $k_j$, $\lim_{j \to \infty} \sum_{i\notin P} \lambda_i^{k_j} b_i^{k_j} = t - S_P$ also exists.
Since $f$ is bounded below, $b_i^k$ are all bounded below, 
and for $i \notin P$, $\lambda_i^k \to 0$, 
so $t- S_P \ge 0$.
Therefore $(x,t)$ can be written as $\sum_{i \in P} \lambda_i (a_i,b_i) + (0,  t- S_P)$.

Recall that a function is lower semi-continuous if and only if its epigraph is closed.
Hence $(a_i, b_i) \in \epi f$ for $i \in P$.
Without loss of generality, suppose $1 \in P$,
and note that $(a_1, b_1 + t - S_P) \in \epi f$, since $t-S_P$ is non-negative.

Armed with these facts, we see we can write $(x,t)$ as a convex combination
of points in $\epi f$,
\[
(x,t) = \lambda_1 (a_1, b_1  + t-S_P) + \sum_{i\in S, i \ne 1} \lambda_i (a_i,b_i).
\]
Thus every $(x,t) \in \cl (\conv (\epi f))$ can be written as 
a convex combination of points in $\epi f$, so $\conv (\epi f)$ is closed.
\end{proof}

\end{boringproof}

\begin{corollary}
Let $S \subset \reals^n$ be a compact set, and 
let $f: S \to \reals$ be lower semi-continuous on $S$.
Then $\epi(\hat{f}) = \cl( \conv (\epi f)) = \conv (\epi f)$.
\end{corollary}




\bibliography{\home/db}

\newcommand{\etalchar}[1]{$^{#1}$}
\begin{thebibliography}{VEG{\etalchar{+}}14}

\bibitem[AE76]{Aubin1976}
J.~Aubin and I.~Ekeland.
\newblock Estimates of the duality gap in nonconvex optimization.
\newblock {\em Mathematics of Operations Research}, 1(3):225--245, 1976.

\bibitem[AL89]{anderson1989}
E.~Anderson and A.~Lewis.
\newblock An extension of the simplex algorithm for semi-infinite linear
  programming.
\newblock {\em Mathematical Programming}, 44(1-3):247--269, 1989.

\bibitem[Bar95]{barvinok1995}
Alexander~I. Barvinok.
\newblock Problems of distance geometry and convex properties of quadratic
  maps.
\newblock {\em Discrete \& Computational Geometry}, 13(1):189--202, 1995.

\bibitem[Bar02]{barvinok2002}
Alexander Barvinok.
\newblock {\em A course in convexity}, volume~54.
\newblock American Mathematical Society Providence, 2002.

\bibitem[BDL11]{bolte2011}
J.~Bolte, A.~Daniilidis, and A.~S. Lewis.
\newblock Generic optimality conditions for semialgebraic convex programs.
\newblock {\em Mathematics of Operations Research}, 36(1):55--70, 2011.

\bibitem[Ber82]{bertsekas1982}
D.~P. Bertsekas.
\newblock {\em Constrained Optimization and Lagrange Multipler Methods}.
\newblock Athena Scientific, 1982.

\bibitem[Ber99]{bertsekas1999}
D.~Bertsekas.
\newblock {\em Nonlinear programming}.
\newblock Athena Scientific, 1999.

\bibitem[Ber09]{bertsekas2009}
D~Bertsekas.
\newblock {\em Convex optimization theory}.
\newblock Athena Scientific, 2009.

\bibitem[BHU96]{benoist1996}
J.~Benoist and J.-B. Hiriart-Urruty.
\newblock What is the subdifferential of the closed convex hull of a function?
\newblock {\em SIAM Journal on Mathematical Analysis}, 27(6):1661--1679, 1996.

\bibitem[BLNP83]{bertsekas1983}
D.~Bertsekas, G.~Lauer, Sandell~J. N., and T.~Posbergh.
\newblock Optimal short-term scheduling of large-scale power systems.
\newblock {\em IEEE Transactions on Automatic Control}, 28(1):1--11, 1983.

\bibitem[BPC{\etalchar{+}}11]{boyd2011}
S.~Boyd, N.~Parikh, E.~Chu, B.~Peleato, and J.~Eckstein.
\newblock Distributed optimization and statistical learning via the alternating
  direction method of multipliers.
\newblock {\em Foundations and Trends in Machine Learning}, 3(1):1--122, 2011.

\bibitem[BTP13]{bouaziz2013}
S.~Bouaziz, A.~Tagliasacchi, and M.~Pauly.
\newblock Sparse iterative closest point.
\newblock In {\em Computer Graphics Forum}, volume~32, pages 113--123. Wiley
  Online Library, 2013.

\bibitem[BV04]{boyd2004}
S.~Boyd and L.~Vandenberghe.
\newblock {\em Convex Optimization}.
\newblock Cambridge University Press, 2004.

\bibitem[BV10]{borwein2010}
J.~M. Borwein and J.~D. Vanderwerff.
\newblock {\em Convex functions: constructions, characterizations and
  counterexamples}.
\newblock Cambridge University Press, 2010.

\bibitem[Cha12]{chartrand2012}
R.~Chartrand.
\newblock Nonconvex splitting for regularized low-rank $+$ sparse
  decomposition.
\newblock {\em IEEE Transactions on Signal Processing}, 60(11):5810--5819,
  2012.

\bibitem[CT94]{CT:94}
G.~Chen and M.~Teboulle.
\newblock A proximal-based decomposition method for convex minimization
  problems.
\newblock {\em Mathematical Programming}, 64:81--101, 1994.

\bibitem[CW13]{chartrand2013}
R.~Chartrand and B.~Wohlberg.
\newblock A nonconvex admm algorithm for group sparsity with sparse groups.
\newblock In {\em IEEE International Conference on Acoustics, Speech and Signal
  Processing (ICASSP)}, pages 6009--6013. IEEE, 2013.

\bibitem[DBEY13]{derbinsky2013}
N.~Derbinsky, J.~Bento, V.~Elser, and J.~S. Yedidia.
\newblock An improved three-weight message-passing algorithm.
\newblock {\em arXiv preprint arXiv:1305.1961}, 2013.

\bibitem[EB92]{EB:92}
J.~Eckstein and D.~P. Bertsekas.
\newblock On the {D}ouglas—-{R}achford splitting method and the proximal
  point algorithm for maximal monotone operators.
\newblock {\em Mathematical Programming}, 55:293--318, 1992.

\bibitem[EF93]{EF:93}
J.~Eckstein and M.~Fukushima.
\newblock Some reformulations and applications of the alternating direction
  method of multipliers.
\newblock {\em Large Scale Optimization: State of the Art}, pages 119--138,
  1993.

\bibitem[FC05]{Fazel2005}
M.~Fazel and M.~Chiang.
\newblock Network utility maximization with nonconcave utilities using
  sum-of-squares method.
\newblock In {\em Proceedings of the European Control Conference}, pages
  1867--1874, 2005.

\bibitem[FG83a]{FG:83}
M.~Fortin and R.~Glowinski.
\newblock {\em Augmented Lagrangian Methods: Applications to the Numerical
  Solution of Boundary-Value Problems}.
\newblock North-Holland: Amsterdam, 1983.

\bibitem[FG83b]{FoG:83}
M.~Fortin and R.~Glowinski.
\newblock On decomposition-coordination methods using an augmented
  {L}agrangian.
\newblock In M.~Fortin and R.~Glowinski, editors, {\em Augmented Lagrangian
  Methods: Applications to the Solution of Boundary-Value Problems}.
  North-Holland: Amsterdam, 1983.

\bibitem[Fuk92]{Fuk:92}
M.~Fukushima.
\newblock Application of the alternating direction method of multipliers to
  separable convex programming problems.
\newblock {\em Computational Optimization and Applications}, 1:93--111, 1992.

\bibitem[Gab83]{Gab:83}
D.~Gabay.
\newblock Applications of the method of multipliers to variational
  inequalities.
\newblock In M.~Fortin and R.~Glowinski, editors, {\em Augmented Lagrangian
  Methods: Applications to the Solution of Boundary-Value Problems}.
  North-Holland: Amsterdam, 1983.

\bibitem[Glo14]{glowinski2014}
Roland Glowinski.
\newblock On alternating direction methods of multipliers: A historical
  perspective.
\newblock In William Fitzgibbon, Yuri~A. Kuznetsov, Pekka Neittaanmäki, and
  Olivier Pironneau, editors, {\em Modeling, Simulation and Optimization for
  Science and Technology}, volume~34 of {\em Computational Methods in Applied
  Sciences}, pages 59--82. Springer Netherlands, 2014.

\bibitem[GM75]{GlM:75}
R.~Glowinski and A.~Marrocco.
\newblock Sur l'approximation, par elements finis d'ordre un, et la resolution,
  par penalisation-dualit\'e, d'une classe de problems de {D}irichlet non
  lineares.
\newblock {\em Revue Fran\c{c}aise d'Automatique, Informatique, et Recherche
  Op\'erationelle}, 9:41--76, 1975.

\bibitem[GM76]{GaM:76}
D.~Gabay and B.~Mercier.
\newblock A dual algorithm for the solution of nonlinear variational problems
  via finite element approximations.
\newblock {\em Computers and Mathematics with Applications}, 2:17--40, 1976.

\bibitem[GT87]{GLT:87}
R.~Glowinski and P.~Le Tallec.
\newblock Augmented {L}agrangian methods for the solution of variational
  problems.
\newblock Technical Report 2965, University of Wisconsin-Madison, 1987.

\bibitem[GW95]{goemans1995}
M.~X. Goemans and D.~P. Williamson.
\newblock Improved approximation algorithms for maximum cut and satisfiability
  problems using semidefinite programming.
\newblock {\em Journal of the ACM}, 42:1115--1145, 1995.

\bibitem[GZ13]{gilimyanov2013}
R.~Gilimyanov and H.~Zhuang.
\newblock Power allocation in {OFDMA} networks: An {ADMM} approach.
\newblock In {\em 5th Traditional Youth Summer School on Control, Information,
  and Optimization}, 2013.

\bibitem[HL12]{hong2012}
M.~Hong and Z.-Q. Luo.
\newblock On the linear convergence of the alternating direction method of
  multipliers.
\newblock {\em arXiv preprint arXiv:1208.3922}, 2012.

\bibitem[HPN00]{horst2000}
R.~Horst, P.~M. Pardalos, and Van~T. N.
\newblock {\em Introduction to global optimization}.
\newblock Kluwer Academic Pub, 2000.

\bibitem[HUL96]{hiriart1996}
J.-B. Hiriart-Urruty and C.~Lemar{\'e}chal.
\newblock {\em Convex Analysis and Minimization Algorithms I: Part 1:
  Fundamentals}, volume 305.
\newblock Springer, 1996.

\bibitem[HY12]{he2012}
B.~He and X.~Yuan.
\newblock On the ${O}(1/n)$ convergence rate of the {D}ouglas-{R}achford
  alternating direction method.
\newblock {\em SIAM Journal on Numerical Analysis}, 50(2):700--709, 2012.

\bibitem[KT12]{kanamori2012}
T.~Kanamori and A.~Takeda.
\newblock Non-convex optimization on {Stiefel} manifold and applications to
  machine learning.
\newblock In T.~Huang, Z.~Zeng, C.~Li, and C.~Leung, editors, {\em Neural
  Information Processing}, volume 7663 of {\em Lecture Notes in Computer
  Science}, pages 109--116. Springer, 2012.

\bibitem[LM79]{LM:79}
P.~L. Lions and B.~Mercier.
\newblock Splitting algorithms for the sum of two nonlinear operators.
\newblock {\em SIAM Journal on Numerical Analysis}, 16:964--979, 1979.

\bibitem[LR01]{lemarechal2001}
C.~Lemar{\'e}chal and A.~Renaud.
\newblock A geometric study of duality gaps, with applications.
\newblock {\em Mathematical Programming}, 90(3):399--427, 2001.

\bibitem[Mot95]{motwani1995}
R.~Motwani.
\newblock {\em Randomized algorithms}.
\newblock Cambridge University Press, 1995.

\bibitem[MWF14]{magnusson2014}
S.~Magnusson, P.~C. Weeraddana, and C.~Fischione.
\newblock A distributed approach for the optimal power flow problem based on
  {ADMM} and sequential convex approximations.
\newblock {\em arXiv preprint arXiv:1401.4621}, 2014.

\bibitem[Obe07]{oberman2007}
A.~M. Oberman.
\newblock The convex envelope is the solution of a nonlinear obstacle problem.
\newblock {\em Proceedings of the American Mathematical Society},
  135(6):1689--1694, 2007.

\bibitem[Obe08]{oberman2008}
A.~M. Oberman.
\newblock Computing the convex envelope using a nonlinear partial differential
  equation.
\newblock {\em Mathematical Models and Methods in Applied Sciences},
  18(05):759--780, 2008.

\bibitem[Pat96]{pataki1996}
G.~Pataki.
\newblock Cone-{LP}'s and semidefinite programs: Geometry and a simplex-type
  method.
\newblock In {\em Integer programming and combinatorial optimization}, pages
  162--174. Springer, 1996.

\bibitem[Pat98]{pataki1998}
G.~Pataki.
\newblock On the rank of extreme matrices in semidefinite programs and the
  multiplicity of optimal eigenvalues.
\newblock {\em Mathematics of Operations Research}, 23(2):339--358, 1998.

\bibitem[PR55]{PR:55}
D.~W. Peaceman and H.~H. Rachford.
\newblock The numerical solution of parabolic and elliptic differential
  equations.
\newblock {\em Journal of the Society for Industrial and Applied Mathematics},
  3:28--41, 1955.

\bibitem[Rik97]{rikun1997}
A.~D. Rikun.
\newblock A convex envelope formula for multilinear functions.
\newblock {\em Journal of Global Optimization}, 10(4):425--437, 1997.

\bibitem[Roc70]{rockafellar1970}
R.~Rockafellar.
\newblock {\em Convex analysis}.
\newblock Princeton University Press, 1970.

\bibitem[SB10]{skaf2010}
J.~Skaf and S.~Boyd.
\newblock Techniques for exploring the suboptimal set.
\newblock {\em Optimization and Engineering}, 11(2):319--337, 2010.

\bibitem[Sta69]{Starr1969}
R.~M. Starr.
\newblock Quasi-equilibria in markets with nonconvex preferences.
\newblock {\em Econometrica: Journal of the Econometric Society}, pages 25--38,
  1969.

\bibitem[TS02]{tawarmalani2002}
M.~Tawarmalani and N.~Sahinidis.
\newblock {\em Convexification and global optimization in continuous and
  mixed-integer nonlinear programming: theory, algorithms, software, and
  applications}, volume~65.
\newblock Springer Science \& Business Media, 2002.

\bibitem[Tse91]{Tse:91}
P.~Tseng.
\newblock Applications of a splitting algorithm to decomposition in convex
  programming and variational inequalities.
\newblock {\em SIAM Journal on Control and Optimization}, 29(1):119--138, 1991.

\bibitem[UB13]{udell2013}
M.~Udell and S.~Boyd.
\newblock Maximizing a sum of sigmoids.
\newblock Available at
  \url{http://www.stanford.edu/~boyd/papers/max_sum_sigmoids.html}, 2013.

\bibitem[Val70]{valadier1970}
M.~Valadier.
\newblock Int{\'e}gration de convexes ferm{\'e}s notamment d'{\'e}pigraphes
  inf-convolution continue.
\newblock {\em ESAIM: Mathematical Modelling and Numerical
  Analysis-Mod{\'e}lisation Math{\'e}matique et Analyse Num{\'e}rique},
  4(R2):57--73, 1970.

\bibitem[VEG{\etalchar{+}}14]{vujanic2014}
R.~Vujanic, P.~M. Esfahani, P.~Goulart, S.~Mariethoz, and M.~Morari.
\newblock Vanishing duality gap in large scale mixed-integer optimization: a
  solution method with power system applications.
\newblock {\em submitted to Journal of Mathematical Programming}, 2014.

\bibitem[Zha10]{zhang2010}
Y.~Zhang.
\newblock Recent advances in alternating direction methods: Practice and
  theory.
\newblock In {\em IPAM Workshop on Continuous Optimization}, 2010.

\end{thebibliography}
\end{document}